\providecommand{\U}[1]{\protect\rule{.1in}{.1in}}
\newtheorem{theorem}{Theorem}
\newtheorem{corollary}[theorem]{Corollary}
\newtheorem{definition}[theorem]{Definition}
\newtheorem{lemma}[theorem]{Lemma}
\newtheorem{proposition}[theorem]{Proposition}
\newtheorem{remark}[theorem]{Remark}
\newcommand{\ric}{\operatorname{Ric}}
\newcommand{\scal}{\operatorname{Scal}}
\renewcommand{\H}{\mathbb{H}}
\newcommand{\scaln}{\scal^{(n)}}
\newcommand{\sect}{\operatorname{Sect}}
\newcommand{\vol}{\operatorname{Vol}}
\newcommand{\ri}[8]{R_{x_0}(S^{#1}_{#2},S^{#3}_{#4},S^{#5}_{#6},S^{#7}_{#8})}
\newcommand{\ka}[4]{\kappa\left(S^{#1}_{#2},S^{#3}_{#4}\right)}
\renewcommand{\S}{{\mathbb{S}}}
\newcommand{\R}{\mathbb{R}}
\newcommand{\abs}[1]{\left|#1\right|}
\newcommand{\ton}[1]{\left(#1\right)}
\newcommand{\qua}[1]{\left[#1\right]}
\newcommand{\co}{{\mathcal O}}
\newcommand{\e}{{\varepsilon}}
\newcommand{\summ}[2]{{\sum_{#1\leq k<j\leq #2}}}
\newcommand{\xt}{\mathrm{xt}_{n+1}}
\newcommand{\xtt}{\mathrm{xt}_{3}}
\begin{document}
\setcounter{tocdepth}{1}
\title{Scalar curvature via local extent}
\author{Giona Veronelli}
\address{Universit\'e Paris 13, Sorbonne Paris Cit\'e, LAGA, CNRS ( UMR 7539)
99\\
avenue Jean-Baptiste Cl\'ement F-93430 Villetaneuse - FRANCE} \email{veronelli@math.univ-paris13.fr}
\date{\today}

\begin{abstract}
We give a metric characterization of the scalar curvature of a smooth Riemannian manifold, analyzing the maximal distance between $(n+1)$ points in infinitesimally small neighborhoods of a point.
 Since this characterization is purely in terms of the distance function, it could be used to approach the problem of defining the scalar curvature on a non-smooth metric space. In the second part we will discuss this issue, focusing in particular on Alexandrov spaces and surfaces with bounded integral curvature.
\end{abstract}

\maketitle

\tableofcontents

\section{Introduction}

It is well known that the volume growth of the geodesic balls of an $n$-dimensional Riemannian manifold $(M,g)$ is tightly related to curvature. On the one hand, the first non-trivial term of the asymptotic expansion of the volumes of infinitesimally small balls centered at a point $x\in M$ is given, up to a constant, by the scalar curvature at $x$. Namely one has
\begin{equation}\label{vol-exp}
\vol_g (B_\e^M(x))=\vol (B_\e^{\R^n})\left(1-\scal_g(x)\frac{n+2}{6}\e^2 +o(\e^2)\right),
\end{equation}
see for instance \cite{Gray}. On the other hand, thanks to the celebrated Bishop-Gromov volumes comparison theorem, complete manifolds with globally lower bounded Ricci (or sectional) curvature, enjoy an upper control on the volume growth of their geodesic balls, i.e. for all $x\in M$ and $r\geq 0$
\[
\ric_g\geq (n-1)K \Rightarrow \vol_g (B_r^M(x))\leq \vol (B_r^{S_K^n}(x)),
\]
$S_K^n$ being the simply connected $n$-dimensional space forms of constant sectional curvature $K$. Moreover, the equality in Bishop-Gromov is realized if and only if $B_r^M(x)$ and $B_r^{S_K^n}(x)$ are isometric. This latter characterization of the equality case can be seen as an extremal result for the recognition problem. Namely, a recognition problem in Riemannian geometry
``asks for the identification of an unknown riemannian manifold via measurements
of metric invariants on the manifold'', \cite{GM-JAMS}. In their program intended to attack extremal recognition problems, K. Grove and S. Markvorsen introduced a new metric invariant of a Riemannian manifold $X$ (or of a more general metric space), which they called the $q$\textsl{-extent}. This roughly measures how far $q$-points of the space $X$ can be one from the others. Namely, one defines
$$
\mathrm{xt}_q(X):=\left(\begin{array}{c}q \\ 2\end{array}\right)^{-1}\sup_{(x_1,\dots,x_q)\in X^q}\sum_{1\leq i<j\leq q}d_X(x_i,x_j),
$$
where the supremum is obviously a maximum when $X$ is compact. In this case, following \cite{GM-JAMS}, we call $q$\textsl{-extender} the set of $q$ points realizing the $q$-extent. Note that the $q$-extent naturally generalizes the diameter (i.e. the $2$-extent) of a metric space and it is intimately related to the $q$-th packing radius
\[
\operatorname{pack}_q X:= \frac 12 \max_{x_1,\dots x_q}\min_{1\leq i<j\leq q}d_X(x_i,x_j);
\]\cite{GM-JAMS,GM-Bull}. 
One of the purposes of Grove and Markvorsen's recognition program was to study the relation between the $q$-extent and global lower bound on the sectional curvature. A (particular case of) a result of \cite{GM-JAMS} says that an $n$-dimensional, $n\geq 2$, Alexandrov space $X$ with sectional curvature (in the sense of Alexandrov) greater or equal than $1$ satisfies
\begin{equation}\label{eq_GM}
\mathrm{xt}_{n+1}(X) \leq \mathrm{xt}_{n+1}(\mathbb S^n),
\end{equation}
$\mathbb S^n$ being the unit sphere of $\R^{n+1}$ with its canonical metric. Moreover, the equality is realized in  \eqref{eq_GM} if and only if $\operatorname{diam} X=\pi$, which, together with Toponogov's diameter sphere theorem, implies that $X$ is necessarily isometric to $\S^n$ provided it is a Riemannian manifold.\footnote{Further results involving the $q$-extent can be found in \cite{Yang-Duke,JMcES} or the survey \cite{Ma-jugo}.}

According to what said above, it is then natural to expect that, similar to the volume growth, the first nontrivial term in the asymptotic expansion of the $(n+1)$-extent of infinitesimally small geodesic balls centered at a point $x\in M$ involves the scalar curvature at $x$. This is the content of the next result.

\begin{theorem}\label{th_main}
Let $(M,g)$ be a smooth $n$-dimensional Riemannian manifold and $x\in \mathrm{int}(M)$. Then
\begin{align*}
\mathrm{xt}_{n+1}(\bar B_\e^M(x))&=\mathrm{xt}_{n+1}(\bar B_1^{\R^n})\e-\frac{1}{6}\sqrt{\frac{n+1}{2n^5}}\scal_g(x)\e^3+\co(\e^4)\\&=\sqrt{2\frac{n+1}{n}}\e\left\{1-\frac{1}{12n^2}\scal_g(x)\e^2+\co(\e^3)\right\},
\end{align*}
as $\e\to 0$.
\end{theorem}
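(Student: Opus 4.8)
The plan is to rescale so that $\bar B_\e^M(x)$ becomes the flat unit ball carried by a metric $\co(\e^2)$-close to the Euclidean one, identify the Euclidean maximizers (regular simplices inscribed in $S^{n-1}$), and run a perturbation argument, the only delicate point being that this maximum is attained along a whole orbit.

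\textbf{Rescaling.} For $\e$ below the injectivity radius at $x$, work in $g$-normal coordinates centered at $x$ and write points of $\bar B_\e^M(x)$ as $\exp_x(\e y)$ with $y\in\bar B_1^{\R^n}$. Let $g_\e(y):=\e^{-2}(\exp_x^\ast g)(\e y)$. By the Gauss lemma the $g_\e$-distance ball of radius $1$ about $0$ is exactly $\bar B_1^{\R^n}$, and $\dist_{g_\e}(y_i,y_j)=\e^{-1}\dist_g(\exp_x(\e y_i),\exp_x(\e y_j))$, so $\mathrm{xt}_{n+1}(\bar B_\e^M(x))=\e\,\mathrm{xt}_{n+1}\big((\bar B_1^{\R^n},\dist_{g_\e})\big)$. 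The normal-coordinate expansion gives $(g_\e)_{ab}(y)=\delta_{ab}-\tfrac{\e^2}{3}R_{akbl}\,y^ky^l+\co(\e^3)$ uniformly on $\bar B_1^{\R^n}$, where $R_{akbl}$ is the Riemann tensor at $x$.

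\textbf{Flat model and its perturbation.} For $y_0,\dots,y_n\in\bar B_1^{\R^n}$ the identity $\sum_{i<j}|y_i-y_j|^2=(n+1)\sum_i|y_i|^2-|\sum_i y_i|^2\le (n+1)^2$ together with Cauchy--Schwarz gives $\mathrm{xt}_{n+1}(\bar B_1^{\R^n})=\sqrt{2(n+1)/n}$, attained exactly on the $O(n)$-orbit $\mathcal M_0$ of a regular simplex inscribed in $S^{n-1}$ (i.e. $|y_i|=1$, $\sum_i y_i=0$, all $|y_i-y_j|$ equal), and the same computation yields the quadratic decay $\Phi_0(y):=\binom{n+1}{2}^{-1}\sum_{i<j}|y_i-y_j|\le \sqrt{2(n+1)/n}-c\,\dist(y,\mathcal M_0)^2$. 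Since the Euclidean segment $\gamma_{ij}(t)=y_i+t(y_j-y_i)$ is a critical point of the Euclidean length, the first-variation formula for the length functional yields, uniformly over distinct $y_i,y_j\in\bar B_1^{\R^n}$,
\[
\dist_{g_\e}(y_i,y_j)=|y_i-y_j|-\frac{\e^2}{6|y_i-y_j|}\int_0^1 R_{akbl}\,\gamma_{ij}(t)^k\gamma_{ij}(t)^l(y_j-y_i)^a(y_j-y_i)^b\,dt+\co(\e^3),
\]
so that $\Phi_\e(y):=\binom{n+1}{2}^{-1}\sum_{i<j}\dist_{g_\e}(y_i,y_j)=\Phi_0(y)+\e^2\Psi(y)+\co(\e^3)$ near $\mathcal M_0$ for an explicit smooth $\Psi$, while $\big|\dist_{g_\e}(y_i,y_j)-|y_i-y_j|\big|\le C\e^2$ on all of $\bar B_1^{\R^n}$.

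\textbf{Maximizing and computing $\Psi$ on $\mathcal M_0$.} Evaluating at $m\in\mathcal M_0$ gives $\max\Phi_\e\ge\max\Phi_0+\e^2\Psi(m)+\co(\e^3)$; conversely a maximizer $p_\e$ satisfies $\Phi_0(p_\e)\ge\max\Phi_0-C\e^2$, hence $\dist(p_\e,\mathcal M_0)\le C'\e$ by the quadratic decay, and expanding $\Phi_\e$ there (using again the quadratic decay to absorb the transverse directions into $\co(\e^3)$) gives $\max\Phi_\e=\max\Phi_0+\e^2\max_{\mathcal M_0}\Psi+\co(\e^3)$. To evaluate $\Psi$ on $\mathcal M_0$, fix a regular inscribed simplex, put $v=y_j-y_i$ and $A_{kl}:=R_{akbl}v^av^b$; the symmetries of $R$ give $A_{kl}=A_{lk}$ and $A_{kl}v^l=0$, whence $\int_0^1 R_{akbl}\gamma_{ij}^k\gamma_{ij}^l v^av^b\,dt=A_{kl}\big(y_i^ky_i^l+\tfrac12(y_i^kv^l+v^ky_i^l)+\tfrac13 v^kv^l\big)=A_{kl}y_i^ky_j^l$. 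Summing over pairs, only the cross-term survives the curvature antisymmetries, and $\sum_i y_i^ay_i^b=\tfrac{n+1}{n}\delta^{ab}$ (valid for every regular inscribed simplex) makes the double sum factor, giving $\sum_{i<j}A^{(ij)}_{kl}y_i^ky_j^l=\tfrac{(n+1)^2}{2n^2}\scal_g(x)$, independent of the simplex. Hence $\Psi|_{\mathcal M_0}$ is the constant $-\tfrac{n+1}{6n^3\sqrt{2(n+1)/n}}\,\scal_g(x)=-\tfrac16\sqrt{\tfrac{n+1}{2n^5}}\,\scal_g(x)$, and multiplying through by $\e$ yields $\mathrm{xt}_{n+1}(\bar B_\e^M(x))=\sqrt{\tfrac{2(n+1)}{n}}\,\e-\tfrac16\sqrt{\tfrac{n+1}{2n^5}}\scal_g(x)\,\e^3+\co(\e^4)$, which is the claim.

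\textbf{Main obstacle.} The crux is the perturbation step: $\Phi_0$ attains its maximum along the positive-dimensional manifold $\mathcal M_0$, so one perturbs a degenerate (Morse--Bott) maximum. This is exactly why one needs the quadratic-decay estimate of $\Phi_0$ transverse to $\mathcal M_0$, the uniformity of the first-variation expansion in a neighborhood of $\mathcal M_0$ (unproblematic since points stay uniformly separated there), and the fact that the leading correction $\Psi$ is \emph{constant} along $\mathcal M_0$ — which the computation confirms, with the answer depending only on $\scal_g(x)$.
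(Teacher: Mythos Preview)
Your argument is correct and follows the same three-ingredient architecture as the paper: the normal-coordinate distance expansion (the paper's Lemma~\ref{lem_br}), a stability estimate for near-maximal $(n+1)$-tuples in the Euclidean ball (the paper's Proposition~\ref{lem_lil-sta}), and the identity expressing the scalar curvature as the sum of $R(y_i,y_j,y_j,y_i)$ over the vertices of a regular inscribed simplex (the paper's Lemma~\ref{lem_normal}). Your Morse--Bott phrasing packages the upper/lower bound argument more cleanly than the paper's hands-on estimates \eqref{eq_fin1}--\eqref{eq_fin2}, but the content is the same.

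Two points of comparison are worth recording. First, your computation of the curvature sum via the second-moment identity $\sum_i y_i^a y_i^b=\tfrac{n+1}{n}\delta^{ab}$ for a regular inscribed simplex is considerably shorter than the paper's inductive proof of Lemma~\ref{lem_normal}; this is a genuine simplification. Second, the quadratic-decay estimate $\Phi_0(y)\le\sqrt{2(n+1)/n}-c\,\dist(y,\mathcal M_0)^2$ does \emph{not} fall out of the Cauchy--Schwarz computation as immediately as you suggest: that computation controls the barycenter, the radial defects $1-|y_i|$, and the variance of the edge-lengths, but turning those into a distance to $\mathcal M_0$ (i.e.\ producing the rotation $A(\e)$) takes real work, which is exactly the content of the paper's Proposition~\ref{lem_lil-sta} and its two internal lemmas. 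Your proof is complete once that estimate is supplied, but it should not be presented as a one-liner.
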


Since
\[
\mathrm{xt}_{n+1}(\bar B_1^{\R^n})\e=\mathrm{xt}_{n+1}(\bar B_\e^{\R^n})=\sqrt{\frac{2(n+1)}{n}},
\]
see Lemma \ref{lem_lil} below, we trivially deduce

\begin{corollary}\label{coro_main}
Let $(M,g)$ be a smooth $n$-dimensional Riemannian manifold and $x\in \mathrm{int}(M)$. Then
\begin{equation}\label{eq_def-scal}
\mathrm{Scal}_g(x)= \lim_{\e\to 0}\frac{12n^2}{\e^2}\left\{1-
\frac{\mathrm{xt}_{n+1}(\bar B_\e^M(x))}{\mathrm{xt}_{n+1}(\bar B_\e^{\R^n})}\right\}.
\end{equation}
\end{corollary}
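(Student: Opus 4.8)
The plan is to work in Riemannian normal coordinates centred at $x_0:=x$, in which the exponential map identifies $\bar B_\e^M(x_0)$ with the Euclidean ball $\bar B_\e^{\R^n}$, and to insert into the sum the classical two--point Taylor expansion of the squared distance: for $p=\exp_{x_0}(u)$ and $q=\exp_{x_0}(v)$ with $u,v$ small,
$$
d(p,q)^2=|u-v|^2-\tfrac13\,\mathbf{R}_{x_0}(u,v)+\co\big((|u|+|v|)^5\big),
$$
where $\mathbf{R}_{x_0}(u,v):=R_{ikjl}\,u^iv^ku^jv^l$ is the curvature quadratic form, normalised so that $\mathbf{R}(u,v)=|u|^2|v|^2-\langle u,v\rangle^2$ on the unit sphere. (This may be read off from $g_{ij}=\delta_{ij}-\tfrac13R_{ikjl}x^kx^l+\co(|x|^3)$ by integrating $g$ along the Euclidean segment from $u$ to $v$ and observing that the deviation of the minimising geodesic from that segment contributes to the length only at order $\co((|u|+|v|)^6)$; it can also be checked by hand on $\S^2$.) Writing the $n+1$ competitors as $p_i=\exp_{x_0}(\e\xi_i)$ with $\xi=(\xi_1,\dots,\xi_{n+1})\in(\bar B_1^{\R^n})^{n+1}$ and taking square roots gives, on any region where the $|\xi_i-\xi_j|$ stay bounded away from $0$,
$$
\sum_{i<j}d(p_i,p_j)=\e\,A(\xi)-\tfrac{\e^3}{6}\,C(\xi)+\e^4 D_\e(\xi),\qquad A(\xi):=\sum_{i<j}|\xi_i-\xi_j|,\quad C(\xi):=\sum_{i<j}\frac{\mathbf{R}_{x_0}(\xi_i,\xi_j)}{|\xi_i-\xi_j|},
$$
with $D_\e$ uniformly bounded there; so $\binom{n+1}{2}\,\xt(\bar B_\e^M(x_0))$ is the maximum over $\xi$ of the left-hand side.

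For the lower bound one tests a fixed regular simplex $\sigma=(\sigma_1,\dots,\sigma_{n+1})$ inscribed in $\S^{n-1}$: by Lemma \ref{lem_lil} it maximises $A$, with $A(\sigma)=\binom{n+1}{2}\sqrt{2(n+1)/n}$ and constant edge length $|\sigma_i-\sigma_j|=\sqrt{2(n+1)/n}$. The algebraic core is the value of $C(\sigma)$. Since $\sum_i\sigma_i=0$ and $\sum_i\sigma_i\otimes\sigma_i=\tfrac{n+1}{n}\,\mathrm{Id}$ (it is invariant under the symmetry group of $\sigma$, which acts irreducibly on $\R^n$, hence a multiple of $\mathrm{Id}$, of trace $n+1$), contracting the Riemann tensor against two such copies collapses two pairs of indices first to a Ricci and then to $\scal$, and yields
$$
\sum_{i\neq j}\mathbf{R}_{x_0}(\sigma_i,\sigma_j)=\Big(\tfrac{n+1}{n}\Big)^2\scal_g(x_0),\qquad\text{hence}\qquad C(\sigma)=\frac{(n+1)^{3/2}}{2\sqrt2\,n^{3/2}}\,\scal_g(x_0).
$$
Crucially this value is unchanged under rotating $\sigma$, since it depends only on $\sum_i\sigma_i\otimes\sigma_i$. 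Dividing $\e A(\sigma)-\tfrac{\e^3}{6}C(\sigma)+\co(\e^4)$ by $\binom{n+1}{2}$ already reproduces the two displayed expansions, so this is a lower bound for $\xt(\bar B_\e^M(x_0))$.

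For the matching upper bound, let $\xi^{(\e)}$ realise the maximum. One first excludes near-collisions: were some $|\xi^{(\e)}_i-\xi^{(\e)}_j|$ not to stay $\ge\tfrac12$, then --- by compactness of the set of such configurations and the fact that the regular simplex has all mutual distances $>1$ --- one would have $A(\xi^{(\e)})\le\binom{n+1}{2}\sqrt{2(n+1)/n}-c_1$ for a fixed $c_1>0$, which, via the crude bound $\sum_{i<j}d(p_i,p_j)\le\e A(\xi^{(\e)})+\co(\e^2)$ (valid since $d(p,q)\le|u-v|+\co(\e^2)$ always), contradicts the lower bound once $\e$ is small. On the remaining region the expansion applies, and using $A(\xi^{(\e)})\le\binom{n+1}{2}\sqrt{2(n+1)/n}$ gives
$$
\binom{n+1}{2}\,\xt(\bar B_\e^M(x_0))=\e A(\xi^{(\e)})-\tfrac{\e^3}{6}C(\xi^{(\e)})+\co(\e^4)\;\le\;\e\binom{n+1}{2}\sqrt{\tfrac{2(n+1)}{n}}-\tfrac{\e^3}{6}C(\xi^{(\e)})+\co(\e^4),
$$
so it suffices to prove $C(\xi^{(\e)})=C(\sigma)+\co(\e)$.

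Comparing the last chain with the lower bound first forces $A(\xi^{(\e)})\ge\binom{n+1}{2}\sqrt{2(n+1)/n}-\co(\e^2)$, and the decisive ingredient is then the \emph{non-degeneracy of the regular simplex as a maximiser of $A$ on $(\bar B_1^{\R^n})^{n+1}$}: that it is the unique maximiser up to the action of $O(n)\times\mathfrak{S}_{n+1}$, and that it satisfies the quadratic growth estimate $\max A-A(\xi)\ge c\,\dist(\xi,\text{orbit})^2$ near the orbit. This yields $\dist(\xi^{(\e)},\text{orbit})=\co(\e)$, whence $C(\xi^{(\e)})=C(\sigma)+\co(\e)$ by Lipschitz continuity of $C$ near the orbit and the rotation-invariance of $C(\sigma)$; combining the two bounds gives $\binom{n+1}{2}\xt(\bar B_\e^M(x_0))=\e A(\sigma)-\tfrac{\e^3}{6}C(\sigma)+\co(\e^4)$, i.e.\ the theorem. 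I expect the main obstacle to be precisely this stability statement --- that almost maximising the $(n+1)$-extent in $\R^n$ forces an almost regular simplex, with the sharp square-root rate; with only a qualitative $o(1)$ one still obtains the expansion, but with remainder $o(\e^3)$ in place of $\co(\e^4)$.
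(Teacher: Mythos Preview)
Your outline matches the paper's proof of Theorem~\ref{th_main} (from which the corollary follows at once via $\xt_{n+1}(\bar B_\e^{\R^n})=\sqrt{2(n+1)/n}\,\e$) in all three essential ingredients: the two-point distance expansion (the paper's Lemma~\ref{lem_br}), the averaging identity over the vertices of a regular simplex (Lemma~\ref{lem_normal}), and the quantitative stability of the regular simplex as the unique $(n+1)$-extender of the Euclidean ball (Proposition~\ref{lem_lil-sta}). The logical flow---lower bound by testing a regular simplex, upper bound by showing the actual extender is $\co(\e)$-close to a rotated regular simplex, then exploiting rotation-invariance of $C(\sigma)$---is exactly the paper's.

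Your treatment of the averaging identity is genuinely nicer than the paper's. The paper proves Lemma~\ref{lem_normal} by a somewhat lengthy double induction using the recursive construction \eqref{inductiverule} of the simplex; your observation that $\sum_i\sigma_i\otimes\sigma_i=\tfrac{n+1}{n}\,\mathrm{Id}$ (by irreducibility of the symmetry-group action) lets one contract the Riemann tensor twice to get $\big(\tfrac{n+1}{n}\big)^2\scal$ in one line, and this also makes the rotation-invariance of $C(\sigma)$ transparent rather than a byproduct of the inductive argument.

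The stability statement you flag as the main obstacle is precisely what the paper proves as Proposition~\ref{lem_lil-sta}, and your anticipated quadratic rate is correct: the proof there hinges on the explicit inequality \eqref{quant-lil}, which bounds the deficit $\max A-A(\xi)$ from below by a positive combination of $|G|^2$, $\sum_k(1-|P_k|^2)$, and $\sin^2\alpha$ (the angle between the edge-length vector and the diagonal in $\R^{n(n+1)/2}$), yielding $\dist(\xi,\text{orbit})=\co(\e)$ from an $\co(\e^2)$ deficit. So nothing in your plan is wrong; the only missing content is that proof, and the paper's argument fills it without surprises.
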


Here and on $\bar B_\e^X(x)$ denotes the closed metric ball $\{y\in X\ :\ d_X(x,y)\leq \epsilon\}$ of the metric space $(X,d_X)$. With an abuse of notation, sometime we will write $\bar B_\e^d(x)$ to specify the metric we are considering on a (fixed) space $X$.

The main ingredients in the proof of Theorem \ref{th_main} are
\begin{itemize}
\item An asymptotic formula for the distance function in geodesic normal coordinates, \cite{Br-CQG,Gallot}. On the tangent space at $x$, one can consider two different distances between two vectors $u$ and $v$, that is the Euclidean one $|u-v|$ and the distance induced by the Riemannian metric $g$ pulled-back via the exponential map, i.e. $d_g(\exp_x(u),\exp_x(v))$. Their difference is given (at the first nontrivial order) by $R_g(u,v,v,u)$, where $R_g$ is the Riemann tensor of $g$; see Lemma \ref{lem_br} below.
\item According to \cite{Li}, the only $(n+1)$-extenders in the $n$-dimensional Euclidean ball are the regular simplexes. Here we need a quantitative analysis characterizing the $(n+1)$-simplexes which almost realize the extent; see Proposition \ref{lem_lil-sta} below.
\item By its very definition it turns out that the scalar curvature is (twice) the average of the sectional curvatures of all the planes spanned by an orthonormal frame. The same holds true if instead one considers all the planes spanned by the vertices of a regular $(n+1)$-simplex; see Lemma \ref{lem_normal}.
\end{itemize}

In view of possible applications to metric spaces, some pathological behavior can arise in the $(n+1)$-extenders of geodesic balls. For instance, for thin $2$-dimensional cones, the $3$-extent of a metric ball around the vertex is realized by 3 points including the vertex itself. In some non-rigorous sense the extender's shape is thus discontinuous with respect to the width of the angle. For this reason, we are led to introduce a slightly modified object, which we call 
\textit{boundary $(n+1)$-extent}  and which can be defined for a closed geodesic ball $\bar B^M_\epsilon$ as 
$$
\mathrm{\partial xt}_{n+1}(\bar B^M_\epsilon):=\left(\begin{array}{c}n+1 \\ 2\end{array}\right)^{-1}\sup_{(x_1,\dots,x_q)\in (\partial B_\epsilon)^q}\sum_{1\leq i<j\leq q}d_M(x_i,x_j).
$$
As it is clear from the proof, the Riemannian characterization of the scalar curvature given in Theorem \ref{th_main} and Corollary \ref{coro_main} holds without changes if one replaces in the formulas the $(n+1)$-extent of the geodesic balls with their boundary $(n+1)$-extent. In particular we have the following

\begin{theorem}\label{th_main_bdy}
Let $(M,g)$ be a smooth $n$-dimensional Riemannian manifold and $x\in \mathrm{int}(M)$. Then
\begin{equation}\label{eq_def-scal_bdy}
\mathrm{Scal}_g(x)= \lim_{\e\to 0}\frac{12n^2}{\e^2}\left\{1-
\frac{\mathrm{\partial xt}_{n+1}(\bar B_\e^M(x))}{\partial \mathrm{xt}_{n+1}(\bar B_\e^{\R^n})}\right\}.
\end{equation}
\end{theorem}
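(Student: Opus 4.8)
The plan is to deduce \eqref{eq_def-scal_bdy} directly from Corollary~\ref{coro_main} by showing that, for $\e$ small enough, the $(n+1)$-extender of $\bar B_\e^M(x)$ is necessarily contained in the geodesic sphere $\partial B_\e^M(x)$, so that $\mathrm{\partial xt}_{n+1}(\bar B_\e^M(x))=\xt(\bar B_\e^M(x))$. For the Euclidean model the analogous equality is immediate: by Li's theorem \cite{Li} the extenders of $\bar B_\e^{\R^n}$ are the regular simplexes inscribed in it, all of whose vertices lie on $\partial B_\e^{\R^n}$, so $\mathrm{\partial xt}_{n+1}(\bar B_\e^{\R^n})=\xt(\bar B_\e^{\R^n})=\sqrt{2(n+1)/n}\,\e$ by Lemma~\ref{lem_lil}. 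Hence \eqref{eq_def-scal_bdy} reduces verbatim to \eqref{eq_def-scal}. Throughout I would work in geodesic normal coordinates centred at $x$, identifying $\bar B_\e^M(x)$ with the Euclidean ball $\bar B_\e^{\R^n}\subset T_xM$ carrying the pulled-back metric; since normal coordinates are radially isometric, $\partial B_\e^M(x)$ corresponds exactly to the Euclidean sphere $\{\abs{u}=\e\}$, so requiring points to lie on the boundary just means imposing $\abs{u_i}=\e$ on their preimages.

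The first step is to localize the extender near a regular simplex. Let $(u_1^\ast,\dots,u_{n+1}^\ast)\in(\bar B_\e^{\R^n})^{n+1}$ realize $\xt(\bar B_\e^M(x))$. By the normal-coordinate expansion of the distance function (Lemma~\ref{lem_br}) one has $d_g(\exp_x u,\exp_x v)=\abs{u-v}+\co(\e^3)$ for $u,v\in\bar B_\e^{\R^n}$, so $(u_i^\ast)$ is an $\co(\e^3)$-almost maximizer of $\sum_{i<j}\abs{u_i-u_j}$ over $(\bar B_\e^{\R^n})^{n+1}$. By Li's characterization of the Euclidean extenders \cite{Li} together with the quantitative stability Proposition~\ref{lem_lil-sta}, it follows that $\e^{-1}(u_1^\ast,\dots,u_{n+1}^\ast)$ differs by $o(1)$ from some regular simplex $(p_1,\dots,p_{n+1})$ inscribed in the unit sphere of $\R^n$, for which $\sum_i p_i=0$, $\abs{p_i}=1$, $\ps{p_i}{p_j}=-1/n$ for $i\neq j$, and hence $\abs{p_i-p_j}=\sqrt{2(n+1)/n}$.

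The heart of the argument is then a first-order variation. Assume, for contradiction, that $\abs{u_i^\ast}<\e$ for some $i$. Differentiating the expansion of Lemma~\ref{lem_br} in $u$, the gradient in normal coordinates at $u=u_i^\ast$ of the map $u\mapsto\sum_{j\neq i}d_g(\exp_x u,\exp_x u_j^\ast)$ equals
\[
\sum_{j\neq i}\frac{u_i^\ast-u_j^\ast}{\abs{u_i^\ast-u_j^\ast}}+\co(\e^2)=\sum_{j\neq i}\frac{p_i-p_j}{\abs{p_i-p_j}}+o(1)=\sqrt{\tfrac{n(n+1)}{2}}\,p_i+o(1),
\]
where the last equality uses $\sum_{j\neq i}p_j=-p_i$. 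Since $u_i^\ast/\abs{u_i^\ast}=p_i+o(1)$, the component of this gradient along the outward radial direction $u_i^\ast/\abs{u_i^\ast}$ exceeds a positive constant depending only on $n$ once $\e$ is small; thus displacing $u_i^\ast$ radially outward strictly increases $\sum_{i<j}d_g(\exp_x u_i,\exp_x u_j)$, contradicting the maximality of $(u_i^\ast)$. Therefore $\abs{u_i^\ast}=\e$ for every $i$, which is exactly the claim. The one delicate point is the $C^1$-control invoked above: Lemma~\ref{lem_br} is stated as a pointwise comparison, whereas here one needs the remainder to be $\co(\e^2)$ after one differentiation in $u$. This is legitimate because the classical normal-coordinate expansions of \cite{Br-CQG,Gallot} are genuine smooth Taylor expansions and so may be differentiated term by term; one should either cite this sharper form or insert the short estimate. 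Everything else is a repackaging of ingredients already used in the proof of Theorem~\ref{th_main}.
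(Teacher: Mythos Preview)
Your argument is correct, and it gives a slightly stronger conclusion than the paper. The paper does not give a separate proof of Theorem~\ref{th_main_bdy}; it simply remarks that the proof of Theorem~\ref{th_main} goes through verbatim with $\partial\mathrm{xt}_{n+1}$ in place of $\mathrm{xt}_{n+1}$. The reason is that in the chain of inequalities leading to \eqref{eq_fin1} and \eqref{eq_fin2}, the lower bound already uses a test configuration $T^n_{\e,k}=\phi_\e(S^n_k)$ lying on $\partial B_\e^M(x)$, while the upper bound only uses that the preimages $P_{\e,k}$ lie in the closed unit ball, a condition satisfied a fortiori by boundary extenders. So both bounds, and hence the asymptotic expansion, hold unchanged for the boundary extent.

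Your route is genuinely different: rather than rerunning the proof, you show that for $\e$ small the full $(n+1)$-extender of $\bar B_\e^M(x)$ must already sit on $\partial B_\e^M(x)$, whence $\partial\mathrm{xt}_{n+1}=\mathrm{xt}_{n+1}$ exactly (not just asymptotically), and then invoke Corollary~\ref{coro_main}. The first-variation computation is clean and the constants are right: using $\sum_{j\neq i}(p_i-p_j)=(n+1)p_i$ one indeed gets a radial derivative $\sqrt{n(n+1)/2}+o(1)$, which forces any interior critical point to be a saddle rather than a maximum. The only extra cost, which you correctly flag, is the need for a $C^1$ version of the distance expansion in Lemma~\ref{lem_br}; this is harmless since the Taylor expansions of \cite{Br-CQG,Gallot} are smooth, but it should be stated. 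In return you obtain the sharper fact that the extent and boundary extent literally coincide on small balls, which the paper's argument does not isolate.
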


Clearly, as in Corollary \ref{coro_main}, also the formula \eqref{vol-exp}, as well as the well-known similar expansion for the area of the surface of small geodeisc balls, allows an alternative definition of scalar curvature at $x\in M$ depending on the geometry of (arbitrarily small) neighborhood of $x$. However, the characterization of the scalar curvature expressed via the $(n+1)$-extent has the peculiarity of depending explicitely on the sole distance function of $M$, and not on the whole Riemannian structure. By a theoretical point of view, this advantage is negligible since a) the distance function uniquely determines the Riemannian metric tensor, \cite{Palais}, and b) whenever the volume measure of the underlying space is given by its $n$-dimensional Hausdorff measure, also the volume measure depends on the distance function. However, the explicit dependence given  in \eqref{eq_def-scal} seems nontrivial. Moreover, this point of view suggests that \eqref{eq_def-scal} could be used to give a definition of scalar curvature (bounds) of a metric space. 

In this direction, we will propose some possible approaches. First, \eqref{eq_def-scal} can be used as it is to introduce a notion of point-wise defined (dimensional) scalar curvature on a metric space $(X,d)$. The asymptotic limit in \eqref{eq_def-scal} in general will not exist, however a $\liminf$ (resp. $\limsup$) version can still be used to define spaces with lower (resp. upper) bounded scalar curvature, see Definition \ref{def_scal} and \ref{def_scalbd} below. This notion of metric scalar curvature reveals consistent at least on metric spaces with lower bounded curvature in the sense of Alexandrov (on which a  unique natural concept of integer dimension is defined). In fact, we have the following result. The inequality part was already observed in \cite[(7)]{GM-JAMS}, while the rigidity is proven in Proposition \ref{prop_rig} below).

\begin{theorem}
Let $(X,d)$ be a $n$-dimensional $CBB(k)$ Alexandrov space, for some $k\in\R$. Then for all $x\in X$ and $\epsilon>0$,
\begin{equation}\label{eq_Alex}
\mathrm{xt}_{n+1}(\bar B_\e^X(x))\leq \mathrm{xt}_{n+1}(\bar B_\e^{S_k^n}),
\end{equation}
where $S_k^n$ is the simply connected $n$-dimensional space form of constant curvature $k$.\\
Moreover, if $\epsilon\leq \frac{\pi}{4\sqrt k}$ or $k\leq 0$, then equality holds in \eqref{eq_Alex} if and only if $\bar B_\e^X(x)$ contains an isometric copy of $\Delta_k^{n+1}(\epsilon)$ with totally geodesic interior. Here, $\Delta_k^{n+1}(\epsilon)$ is the unique (up to isometries) regular $(n+1)$-simplex inscribed in $\bar B_\e^{S_k^n}.$
\end{theorem}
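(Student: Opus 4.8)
The plan is to settle the model ball and the easy implication first, and then to extract from the equality case a rigid finite configuration whose rigidity can be pushed out to a totally geodesic flat simplex.

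\emph{The model ball and the implication ``$\Leftarrow$''.} After rescaling we may assume $k\in\{-1,0,1\}$, so that the hypothesis becomes $\e\le\pi/4$ when $k=1$ and $\bar B_\e^{S_k^n}$ is in all cases a strictly convex ball. One first needs the constant-curvature analogue of Li's theorem: the supremum defining $\mathrm{xt}_{n+1}(\bar B_\e^{S_k^n})$ is attained exactly by the regular $(n+1)$-simplices inscribed in the geodesic sphere $\partial B_\e^{S_k^n}$, all mutually isometric to $\Delta_k^{n+1}(\e)$. For $k\le0$ this is Li's variational argument transplanted essentially verbatim; for $k=1$ the bound $\e\le\pi/4$ is exactly what keeps the relevant distance functions convex enough for the argument to close (the borderline case $\e=\pi$ being the sphere computation of \cite{GM-JAMS}). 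It follows that $\mathrm{xt}_{n+1}(\bar B_\e^{S_k^n})$ equals the common edge length of $\Delta_k^{n+1}(\e)$, that its circumcenter $o$ is at distance $\e$ from every vertex, and that the angle at $o$ subtended by an edge equals $\arccos(-\tfrac1n)$ --- the same value as for a Euclidean regular simplex, since the directions from $o$ to the vertices form a Euclidean regular configuration in $\Sigma_oS_k^n=\S^{n-1}$. The implication ``$\Leftarrow$'' is then immediate: if $\bar B_\e^X(x)$ contains an isometric copy of $\Delta_k^{n+1}(\e)$, its $n+1$ vertices lie in $\bar B_\e^X(x)$ and their pairwise distances sum to $\binom{n+1}{2}\,\mathrm{xt}_{n+1}(\bar B_\e^{S_k^n})$, so equality holds in \eqref{eq_Alex} by the inequality already proved.

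\emph{Extracting the rigid configuration.} Assume equality in \eqref{eq_Alex}. Since $X$ is a complete $CBB(k)$ space, $\bar B_\e^X(x)$ is compact and the extent is realised by an $(n+1)$-extender $\{x_1,\dots,x_{n+1}\}$. Re-running the argument behind \cite[(7)]{GM-JAMS}: writing $\xi_i=\uparrow_x^{x_i}\in\Sigma_x$ and $\rho_i=d(x,x_i)\le\e$, the $CBB(k)$ comparison gives $\angle(\xi_i,\xi_j)\ge\tilde\angle_k(x_i\,x\,x_j)$; realising the $\xi_i$ in $\Sigma_oS_k^n=\S^{n-1}$ with not smaller pairwise angles and setting $y_i=\exp_o(\rho_i\eta_i)$ one gets $d_{S_k^n}(y_i,y_j)\ge d(x_i,x_j)$, by monotonicity of the third side of an $S_k^2$-triangle in the included angle. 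Equality in \eqref{eq_Alex} forces equality throughout: $\{y_i\}$ is an $(n+1)$-extender of $\bar B_\e^{S_k^n}$, hence a regular simplex inscribed in $\partial B_\e^{S_k^n}$ by the first paragraph; therefore $\rho_i=\e$ for every $i$ (so the extender lies on $\partial B_\e^X(x)$), $d(x_i,x_j)$ equals the edge length of $\Delta_k^{n+1}(\e)$, and $\angle(\xi_i,\xi_j)=\tilde\angle_k(x_i\,x\,x_j)=\arccos(-\tfrac1n)$ for all $i\ne j$. Equivalently, the finite metric space $\{x,x_1,\dots,x_{n+1}\}$ is isometric to $\{o\}\cup\{\text{vertices of }\Delta_k^{n+1}(\e)\}$, and in each triangle $x_i\,x\,x_j$ the comparison angle at $x$ is attained.

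\emph{Flatness at the vertex $x$.} Attainment of the comparison angle is the hypothesis of the Flat Triangle Lemma for $CBB(k)$ spaces (applicable because $\e$ is small): for each $i<j$ the triangle $[x\,x_i]\cup[x\,x_j]\cup[x_i\,x_j]$ bounds a convex subset $T_{ij}\subseteq\bar B_\e^X(x)$, isometric and totally geodesically to $\mathrm{conv}_{S_k^n}(o,v_i,v_j)$. A sharpening to be reused below: along the unit-speed geodesics $\gamma_i\colon[0,\e]\to X$ from $x$ to $x_i$, the comparison angle $\tilde\angle_k(\gamma_i(s)\,x\,\gamma_j(t))$ is non-increasing in $s$ and $t$, tends to $\angle(\xi_i,\xi_j)=\arccos(-\tfrac1n)$ as $s,t\to0$, and equals $\arccos(-\tfrac1n)$ at $s=t=\e$; being monotone it is constant, so $\bigcup_i\gamma_i([0,\e])$ is isometric to the cone $\bigcup_i[o\,v_i]\subset S_k^n$. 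Next, $\{\xi_i\}$ realises $\mathrm{xt}_{n+1}(\Sigma_x)=\mathrm{xt}_{n+1}(\S^{n-1})=\arccos(-\tfrac1n)$, whence $\operatorname{diam}\Sigma_x=\pi$ by the $CBB(1)$ rigidity of \cite{GM-JAMS}; feeding the stronger fact that all $d(\xi_i,\xi_j)$ equal $\arccos(-\tfrac1n)$ into the suspension structure of $\Sigma_x$ and iterating on the links $\Sigma_{\xi_i}\Sigma_x$ --- using the identity $\tilde\angle_1(\text{equilateral triangle of side }\arccos(-\tfrac1n))=\arccos(-\tfrac1{n-1})$ --- one upgrades this to $\Sigma_x\cong\S^{n-1}$, so $T_xX\cong\R^n$ and there is room at $x$ for a full $n$-dimensional flat piece.

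\emph{Assembling the simplex; the main obstacle.} One then proves by induction on $|I|$ that for every nonempty $I\subseteq\{1,\dots,n+1\}$ the set $\{x\}\cup\{x_i:i\in I\}$ spans a convex, totally geodesic subset $R_I\subseteq\bar B_\e^X(x)$ isometric to $\mathrm{conv}_{S_k^n}(o,v_i:i\in I)$. Since $o$ lies in the interior of $\Delta_k^{n+1}(\e)$, the case $I=\{1,\dots,n+1\}$ gives $\mathrm{conv}_{S_k^n}(o,v_1,\dots,v_{n+1})=\Delta_k^{n+1}(\e)$, the required copy. The cases $|I|\le2$ are the geodesics $[x\,x_i]$ and the flat triangles $T_{ij}$ of the previous paragraph. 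For the inductive step, the facets $R_{I\setminus\{i_0\}}$ ($i_0\in I$) are already built and all contain $x$; one glues them along their common lower faces into a ``bowl'' and fills in the flat $|I|$-dimensional region it bounds, flatness being forced by the same squeezing as for the cone --- for an already-placed point $p$ and a geodesic $\delta$ issued from $x$ into the region, $\tilde\angle_k(p\,x\,\delta(t))$ is trapped between its known limit at $x$ and its known model value where $\delta$ meets the flat boundary, hence is constant, pinning all internal distances to their model values, while $\dim X=n$ guarantees the filling has the right dimension. I expect precisely this last step --- promoting the rigid skeleton (the cone and the flat $2$-faces) to a rigid solid $n$-dimensional simplex, and checking that the gluings are mutually consistent --- to be the principal difficulty, together with the constant-curvature version of Li's theorem needed in the first paragraph.
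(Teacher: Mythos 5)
Your first two steps are sound and essentially reproduce the paper's route: the inequality and the extraction of the rigid finite configuration are exactly what one gets from a noncontracting map into the model ball (the paper uses \cite[Proposition 10.6.10]{BBI} directly; your ``realise the $\xi_i$ in $\S^{n-1}$ with not smaller pairwise angles'' is precisely the inductive content of that proposition applied to $\Sigma_x$, so it should be cited rather than asserted), and equality then forces all pairwise distances of the extender to equal the edge length of $\Delta_k^{n+1}(\epsilon)$, with the image configuration a regular simplex inscribed in $\partial B_\e^{S_k^n}$. The fact that extenders of $\bar B_\e^{S_k^n}$ are exactly the inscribed regular simplexes (this is where the restriction $\epsilon\le\frac{\pi}{4\sqrt k}$ enters) is quoted from \cite{GM-JAMS} in the paper; your one-line sketch of a ``transplanted Li argument'' is not a proof of it, but at least it is an identifiable citable fact. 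The ``$\Leftarrow$'' direction is fine.

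The genuine gap is the step you yourself flag: passing from the rigid $1$-skeleton and the flat $2$-faces to a solid, totally geodesic, isometric copy of $\Delta_k^{n+1}(\epsilon)$. Your inductive ``bowl and filling'' argument is not an argument: the $|I|$-dimensional ``region'' to be filled is never defined intrinsically in $X$, the squeezing of $\tilde\angle_k(p\,x\,\delta(t))$ presupposes that you already know the model value of distances from $p$ to points of $\delta$ (which is what you are trying to prove), and $\dim X=n$ alone does not force the filling to be $|I|$-dimensional or isometric near the configuration; likewise the claim $\Sigma_x\cong\S^{n-1}$ via iterated links is only sketched. The paper closes exactly this hole by a different reduction: since all pairwise distances of the extender $\{Q_j\}$ equal the common edge length, the extender also realizes the $(n+1)$-packing radius, and by angle comparison $\operatorname{pack}_{n+1}(\bar B_\e^X(x))=\operatorname{pack}_{n+1}(\bar B_\e^{S_k^n})$ (\cite[(3.1)]{GM-JAMS}); the proof of \cite[Lemma 2.3]{GM-JAMS} then yields precisely the isometrically embedded $\Delta_k^{n+1}(\epsilon)$ with totally geodesic interior and vertices $\{Q_j\}$. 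Unless you either invoke that lemma or genuinely carry out its filling argument (an induction on dimension through spaces of directions, not a two-line squeeze), your proof of the equality case is incomplete.
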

A special class of two dimensional metric spaces is given by the surfaces with bounded integral curvature introduced by Alexandrov. These spaces have a natural notion of curvature measure, which can be described for instance as a weak limit of integral curvatures along an approximating sequence of smooth Riemannian metrics (for details, see the references given in Section \ref{sect_machi}). In Proposition \ref{pr_equal-Machi} we will show that the regular part of this curvature measure can be point-wisely obtained via the local extent as in \eqref{eq_def-scal}. Clearly on the support of the singular part of the curvature measure, the $3$-extent of geodesic $\e$-balls does not converge as $\e\to 0$, so that a point-wise definition is there ill-posed.

Because of the intrinsic singularity of the underlying spaces, it would be more convenient to define on a metric space a scalar curvature measure instead of a point-wise definition. I'm grateful to J. Bertrand and M. Gromov for pointing this out to me. In the interesting paper \cite{KLP}, Kapovitch, Lytchak and Petrunin proposed a notion of \textit{mm-curvature measure} generalizing the asymptotic characterization \eqref{vol-exp}. In particular they proved that such a measure is well-defined on BIC surfaces, although unfortunately it does not coincide with the intrinsic curvature measure. In Section \ref{sect_scalmeas} we will propose a construction similar to that of \cite{KLP}, generalizing to measures on non-smooth surfaces the expansions of the extent and of the boundary extent of geodesic balls (formulas \eqref{def_scal} and \eqref{def_scalbd}) instead of the expansion of the volumes \eqref{vol-exp}.
Namely, consider a smooth $n$-dimensional Riemannian manifold $(M,g)$. For some $r_0>0$ small enough define two families of measure $\{e_r\}_{0<r<r_0}$ and $\{\partial e_r\}_{0<r<r_0}$
by
\[e_r:=\left(1-\frac{\xt(B_r(x))}{\xt(\bar B^{\R^n}_r)}\right)\mathcal H^n\]
and
\[\partial e_r:=\left(1-\frac{\partial\xt(\bar B_r(x))}{\partial\xt(\bar B^{\R^n}_r)}\right)\mathcal H^n.\]
In view of Theorem \ref{th_main} and \ref{th_main_bdy} we have that both $r^{-2}e_r$ and $r^{-2}\partial e_r$ converge weakly in the sense of measure to $\scal_g\cdot \mathcal H^n$. In the non-smooth case, we can prove that on a surface with bounded integral curvature the families $\{r^{-2}e_r\}$ and $\{r^{-2}\partial e_r\}$ are uniformly bounded, so that at least some subsequence converges; see Theorem \ref{th_locfincurv}.
 
The paper is organized as follows. In Section \ref{sect_proof} we prove the Riemannian characterization given by Theorem \ref{th_main} and propose a possible generalization involving the $p$ powers of the distances, see Theorem \ref{th_main_p}. In Section \ref{sect_metric} we consider the point-wise definition of (bounds on the) scalar curvature on Alexandrov spaces. In the last two sections we focus on surfaces with bounded integral curvature. In Section \ref{sect_machi} we show that the point-wise characterization holds on the regular part of BIC surfaces, while in Section \ref{sect_scalmeas} we introduce the (boundary) extent curvature measures inspired by the construction of \cite{KLP}.

\section{Proof of the Riemannian characterization}\label{sect_proof}

Before starting the proof of Theorem \ref{th_main}, let us remark that the result is trivial for $M=\R^n$. In this case, the value of $\mathrm{xt}_{n+1}(\bar B_1^{\R^n})$ is given by the following lemma; see \cite[Lemma 3]{Li}.
\begin{lemma}\label{lem_lil}
It holds
$$\mathrm{xt}_{n+1}(\bar B_1^{\R^n})=\sqrt{2\frac{n+1}{n}}.
$$
Moreover the regular $(n+1)$-simplexes inscribed in $\bar B_1^{\R^n}$ are the only $(n+1)$-tuples of points realizing the equality.
\end{lemma}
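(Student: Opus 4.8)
The plan is to bound the sum of pairwise distances by the square root of the sum of squared pairwise distances, and then control the latter by an elementary identity. Let $x_1,\dots,x_{n+1}\in\bar B_1^{\R^n}$. Expanding the squares,
\[
\sum_{1\leq i<j\leq n+1}\abs{x_i-x_j}^2=n\sum_{i=1}^{n+1}\abs{x_i}^2-2\sum_{i<j}\ps{x_i}{x_j},
\]
and since $0\leq\abs{\sum_i x_i}^2=\sum_i\abs{x_i}^2+2\sum_{i<j}\ps{x_i}{x_j}$ we get $-2\sum_{i<j}\ps{x_i}{x_j}\leq\sum_i\abs{x_i}^2$, whence
\[
\sum_{i<j}\abs{x_i-x_j}^2\leq(n+1)\sum_i\abs{x_i}^2\leq(n+1)^2,
\]
the last step using $\abs{x_i}\leq1$. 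By the Cauchy--Schwarz inequality,
\[
\sum_{i<j}\abs{x_i-x_j}\leq\Bigl(\binom{n+1}{2}\sum_{i<j}\abs{x_i-x_j}^2\Bigr)^{1/2}\leq(n+1)\binom{n+1}{2}^{1/2},
\]
and dividing by $\binom{n+1}{2}=\tfrac{n(n+1)}{2}$ gives $\xt(\bar B_1^{\R^n})\leq\sqrt{2(n+1)/n}$. Equality is attained on any regular $(n+1)$-simplex inscribed in the unit sphere (whose common side length is exactly $\sqrt{2(n+1)/n}$), which proves the formula.

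For the rigidity statement I would track when each inequality above is an equality. Equality in $\sum_i\abs{x_i}^2\leq n+1$ forces $\abs{x_i}=1$ for all $i$; equality in $\abs{\sum_i x_i}^2\geq0$ forces $\sum_i x_i=0$; and equality in Cauchy--Schwarz forces all the $\abs{x_i-x_j}$ to be equal. From the first and third, $\ps{x_i}{x_j}=1-\tfrac12\abs{x_i-x_j}^2$ is a constant $c$ for $i\neq j$, and substituting into $0=\abs{\sum_i x_i}^2=(n+1)+n(n+1)c$ yields $c=-1/n$. Hence the Gram matrix of $(x_1,\dots,x_{n+1})$ equals $(1+\tfrac1n)I_{n+1}-\tfrac1n J_{n+1}$, which is positive semidefinite of rank $n$ (eigenvalues $\tfrac{n+1}{n}$ with multiplicity $n$, and $0$), and is precisely the Gram matrix of the vertices of a regular $(n+1)$-simplex inscribed in $\S^{n-1}$. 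Since a tuple of vectors is determined by its Gram matrix up to an orthogonal transformation, $(x_1,\dots,x_{n+1})$ is such a simplex. Conversely every inscribed regular $(n+1)$-simplex has centroid at the origin, all vertices on the sphere, and all edges of equal length, so it saturates all three inequalities.

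Note that the argument never needs a preliminary reduction to points on the sphere: the chain of inequalities is valid directly on the closed ball. The step requiring the most bookkeeping is the passage from $\sum_{i<j}\abs{x_i-x_j}$ to the $(n+1)$-extent, i.e. keeping track of the normalizing factor $\binom{n+1}{2}^{-1}$ in order to recover exactly $\sqrt{2(n+1)/n}$, together with checking that the candidate Gram matrix $(1+\tfrac1n)I_{n+1}-\tfrac1n J_{n+1}$ is genuinely realizable in $\R^n$, i.e. positive semidefinite of rank exactly $n$; both are immediate once written out. Alternatively, one may simply invoke \cite[Lemma 3]{Li}.
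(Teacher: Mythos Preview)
Your proof is correct and is essentially the argument the paper has in mind: the paper does not give its own proof of this lemma but simply cites \cite[Lemma~3]{Li}, whose proof is exactly the barycenter identity plus Cauchy--Schwarz you wrote out. The same chain of inequalities reappears in quantitative form in the paper's proof of Proposition~\ref{lem_lil-sta} (see~\eqref{quant-lil}), so your write-up aligns perfectly with the paper's approach.
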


We recall here some basic useful facts concerning regular simplexes in the Euclidean space. Fix $\{e_i\}_{i=1}^n$ the canonical orthonormal basis of $\R^n$. For $p=1,\dots,n$ and for $k=1,\dots,p+1$, we introduce the vectors $S^p_k\in \R^n$ defined by 
$S^p_1=e_p$ for $p=1,\dots,n$ and by the inductive rule
\begin{equation}\label{inductiverule}
S_{k}^p=-\frac{1}{p}S_{1}^p+\frac{\sqrt{p^2-1}}{p} S_{k-1}^{p-1}.
\end{equation}
It is easy to see that 
\begin{equation}\label{simplex}
\mathcal S^n=\{S^n_k\}_{k=1}^{n+1}\in \ton{\R^n}^{n+1}
\end{equation}
is a regular $(n+1)$-simplex inscribed in $\partial B_1^{\R^n}$. Similarly,
for any $1\leq p\leq n$, the points  $\{S^p_j\}_{j=1}^{p+1}$ form a regular $(p+1)$-simplex inscribed in $\partial B_1^{\R^{p}}\times\{0_{\R^{n-p}}\}$. For convenience we consider $\{S^1_1,S^1_2\}=\{e_1,-e_1\}$ as a regular 2-simplex of $\R^1\times\{0_{\R^{n-1}}\}$.

In the following, we will need a stable version of the equality case in Lemma \ref{lem_lil}, that is, simplexes which approximate the equality are almost regular. From now on, following standard notation, we will say that a (possibly vector-valued) function $f:(0,\e_0)\to\R$ for some $\e_0>0$ satisfies $f(\e)=\co(\e^a)$ for some $a\in\mathbb Z$ if there exists a constant $C$ (possibly depending on the underlying manifold $M$) such that $\limsup_{\e\to 0}|f(\e)|\e^{-a}\leq C$.

As announced above, we have the following

\begin{proposition}\label{lem_lil-sta}
Let $\e\mapsto P_{\e,k}\in B_1^{\R^n}$, $k=1\dots n+1$, be $n+1$ vector valued functions defined on $(0,\e_0)$ for some small $\e_0>0$. If
\begin{equation}\label{eq_sides}
\summ 1 {n+1} |P_{\e,k}-P_{\e,j}|- \sqrt{\frac{n(n+1)^3}{2}} =\co(\e^2),\quad\text{as }\e\to0,
\end{equation}
then 
\[
|P_{\e,k}-P_{\e,j}| -\sqrt{\frac{2(n+1)}{n}}=\co(\e),\quad\text{as }\e\to0,
\]
for any $1\leq k<j \leq n+1$. Moreover, fixed a regular $(n+1)$-simplex $\mathcal S^n=\{S^n_k\}_{k=1}^{n+1}\in \ton{\R^n}^{n+1}$ inscribed in $\partial B_1^{\R^n}$, there exists a function $A:(0,\e_0)\to O(n)$ (taking values in the isometries group of $\bar B_1^{\R^n}$) such that 
\[
|P_{\e,k}-A(\e)S^n_k|=\co(\e)
\]
as $\e\to 0$ for all $1\leq k\leq n+1$.
\end{proposition}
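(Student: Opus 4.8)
The plan is to combine a compactness/continuity argument with the rigidity statement of Lemma \ref{lem_lil}. The first claim (that each individual side length is within $\co(\e)$ of $\sqrt{2(n+1)/n}$) should follow from an elementary inequality argument: since each $P_{\e,k}$ lies in $\bar B_1^{\R^n}$, every side satisfies $|P_{\e,k}-P_{\e,j}|\le 2$, and more importantly, by Lemma \ref{lem_lil} the sum of all $\binom{n+1}{2}$ side lengths is bounded above by $\binom{n+1}{2}\sqrt{2(n+1)/n}=\sqrt{n(n+1)^3/2}$. Hypothesis \eqref{eq_sides} says this upper bound is nearly achieved, with a defect of order $\e^2$. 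So I would write, for each fixed pair $(k_0,j_0)$,
\[
\sqrt{\tfrac{n(n+1)^3}{2}} - \co(\e^2) \le |P_{\e,k_0}-P_{\e,j_0}| + \sum_{(k,j)\neq(k_0,j_0)} |P_{\e,k}-P_{\e,j}|,
\]
and bound each of the remaining terms by $\sqrt{2(n+1)/n}$ using Lemma \ref{lem_lil} applied to (appropriate sub-configurations, or just the fact that no $(n+1)$-configuration can have a side exceeding the diameter of an optimal one). Rearranging gives $|P_{\e,k_0}-P_{\e,j_0}| \ge \sqrt{2(n+1)/n} - \co(\e^2)$; combined with the complementary upper bound $|P_{\e,k_0}-P_{\e,j_0}| \le \sqrt{2(n+1)/n} + \co(\e^2)$ obtained symmetrically (if one side were too long, the sum of the others would have to compensate by being short, contradicting \eqref{eq_sides}), we get the desired $\co(\e)$ — in fact $\co(\e^2)$ — estimate on each side. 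One must be a little careful here: bounding an arbitrary side of an arbitrary $(n+1)$-point configuration requires knowing the max possible single side given the sum constraint, which is exactly the content of a quantitative form of Lemma \ref{lem_lil}; I expect this is where a short convexity/Lagrange-multiplier computation enters.

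Next, for the second claim — existence of $A(\e)\in O(n)$ aligning $P_{\e,k}$ with $A(\e)S^n_k$ up to $\co(\e)$ — I would argue by the following normalization and perturbation scheme. The Gram matrix of the centered configuration $\{P_{\e,k} - \bar P_\e\}$, where $\bar P_\e = \frac{1}{n+1}\sum_k P_{\e,k}$, is determined by the pairwise distances, which by the first part are all within $\co(\e)$ of the pairwise distances of a regular simplex $\mathcal S^n$. Hence the Gram matrix of $\{P_{\e,k}-\bar P_\e\}$ converges (at rate $\co(\e)$) to the Gram matrix of $\{S^n_k\}$, which has rank $n$ and is nondegenerate on the hyperplane spanned by the centered simplex. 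Since configurations in $\R^n$ with the same Gram matrix differ by an element of $O(n)$, and since the map from (nondegenerate) Gram matrices to configurations-modulo-$O(n)$ is locally Lipschitz near the regular-simplex Gram matrix, there exists $A(\e)\in O(n)$ with $|(P_{\e,k}-\bar P_\e) - A(\e)S^n_k| = \co(\e)$ for all $k$. Finally one absorbs the translation: because $\sum_k S^n_k = 0$ and the configuration is $\co(\e)$-close (side-wise) to inscribed in $\partial B_1$, the barycenter $\bar P_\e$ itself is $\co(\e)$ (this uses that a nearly-regular simplex inscribed in a ball has barycenter near the center — quantify via the identity relating $|\bar P_\e|$ to the deficit in $\sum_k |P_{\e,k}|^2$), so $|P_{\e,k} - A(\e)S^n_k| \le |P_{\e,k}-\bar P_\e - A(\e)S^n_k| + |\bar P_\e| = \co(\e)$.

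The main obstacle I anticipate is making the Gram-matrix-to-configuration step genuinely \emph{quantitative and uniform} in $\e$: one needs the inverse of the map "configuration $\mapsto$ distance matrix" to be Lipschitz on a neighborhood of the regular simplex, and in particular one must rule out the configurations degenerating (lying in a lower-dimensional subspace), which would make the alignment $A(\e)$ non-unique and potentially discontinuous. This is exactly the kind of "thin simplex" pathology alluded to in the introduction. The cleanest route is probably to show directly from the near-equality of all pairwise distances that the centered Gram matrix stays $\co(\e)$-close in operator norm to a fixed nondegenerate matrix, so its smallest eigenvalue is bounded below uniformly for small $\e$; then the implicit function theorem (or an explicit diagonalization/square-root argument for symmetric positive-semidefinite matrices) yields the configuration and the Lipschitz control, and the choice of $A(\e)$ is handled by polar decomposition. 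Everything else is bookkeeping with the explicit vectors $S^p_k$ from \eqref{inductiverule}.
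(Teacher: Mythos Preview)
Your treatment of the first claim has a real gap. The step ``bound each of the remaining terms by $\sqrt{2(n+1)/n}$'' is simply false: individual sides of an $(n+1)$-tuple in $\bar B_1^{\R^n}$ can be as large as $2>\sqrt{2(n+1)/n}$, so you cannot isolate one side and control the others term-by-term. You acknowledge this and defer to ``a short convexity/Lagrange-multiplier computation,'' but that is exactly where the substance lies, and your expected conclusion of $\co(\e^2)$ per side is too strong: the correct order is only $\co(\e)$, the square-root loss being unavoidable near a smooth maximum. The paper obtains it through a quantitative sharpening of Li's argument: one writes the chain \eqref{quant-lil}, which bounds $\bigl(\summ 1{n+1}|P_{\e,k}-P_{\e,j}|\bigr)^2/\cos^2\alpha$ above by $\frac{n(n+1)^3}{2}-\frac{n(n+1)^3}{2}|G|^2-\frac{n(n+1)^2}{2}\sum_k(1-|P_{\e,k}|^2)$, where $\alpha$ is the angle in $\R^{n(n+1)/2}$ between the vector of side-lengths and the constant vector $I$. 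The hypothesis \eqref{eq_sides} then forces $\sin^2\alpha$, $|G|^2$ and every $1-|P_{\e,k}|^2$ to be $\co(\e^2)$ simultaneously; hence $\alpha=\co(\e)$, and projecting onto $I$ gives each side equal to $\sqrt{2(n+1)/n}+\co(\e)$. Note that this same computation delivers $|G|=\co(\e)$ and $|P_{\e,k}|=1+\co(\e^2)$, which you need downstream and which do not follow from the side-lengths alone.

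For the second claim your Gram-matrix route is a genuinely different approach from the paper's, and once the first part (together with $|G|=\co(\e)$) is established it is a legitimate alternative. The paper instead constructs $A(\e)$ by hand: it introduces auxiliary vectors $P^p_{\e,k}$ via the same recursion \eqref{inductiverule} that defines the $S^p_k$, proves by induction on $p$ that the scalar products $\langle P^p_{\e,1},P^q_{\e,j}\rangle$ match those of the $S^p_k$ up to $\co(\e)$, and then assembles $A(\e)\in O(n)$ one dimension at a time, at each step rotating only in the orthogonal complement of the already-aligned directions. Your approach via Lipschitz continuity of the map from full-rank Gram matrices to configurations modulo $O(n)$ is shorter and more conceptual; the paper's is more elementary and entirely explicit, avoiding any implicit-function or polar-decomposition machinery. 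Either way, be sure to obtain $|G|=\co(\e)$ from the quantitative inequality above rather than from the side-lengths, since a rigid copy of the regular simplex translated inside the ball has the exact optimal side-lengths but nonzero barycenter.
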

\begin{proof}
Let $G$ be the barycenter of the given points, i.e. $G=\frac 1{n+1}\sum_{k=1}^{n+1}P_{\e,k}$. Reasoning as in \cite[Lemma 3]{Li}, we get
\begin{align}\label{quant-lil}
&\frac{n(n+1)^3}{2}-\frac{n(n+1)^3}{2}|G|^2- \frac{n(n+1)^2}{2}\sum_{k=1}^{n+1}(1-|P_{\e,k}|^2)\\
&\geq \frac{n(n+1)^2}{2}\sum_{k=1}^{n+1}|P_{\e,k}|^2  -\frac{n(n+1)^3}{2}|G|^2\nonumber\\
&=\frac{n(n+1)^2}{2}\sum_{k=1}^{n+1}|P_{\e,k}-G|^2\nonumber\\
&=\frac{n(n+1)}{2}\summ1{n+1}|P_{\e,k}-P_{\e,j}|^2\nonumber\\
&=\ton{\summ1{n+1}|P_{\e,k}-P_{\e,j}|}^2\frac 1{\cos^2\alpha}\nonumber
\end{align}
where $\alpha$ is the angle at the origin in $\R^{n(n+1)/2}$ formed by the vector $E=(|P_{\e,k}-P_{\e,j}|)_{1\leq k<j\leq n+1}\in\R^{n(n+1)/2}$ and by the vector $I\in\R^{n(n+1)/2}$ all whose components are $1$. Condition \eqref{eq_sides} ensures that $\cos\alpha\neq 0$ for $\e$ small enough and that
\begin{align*}
\frac{n(n+1)^2}{2}\qua{(n+1)\sin^2\alpha +\cos^2\alpha\ton{(n+1)|G|^2+\sum_{k=1}^{n+1}(1-|P_{\e,k}|^2)}}\leq \co(\e^2).
\end{align*}
In particular $\alpha = \co(\e)$, $|G|=\co(\e)$, and 
\begin{equation}\label{lenghtpj}
(1-|P_{\e,k}|^2)=\co(\e^2),\qquad\forall\; k=1,\dots, n+1.
\end{equation}
Let $\nu\in \R$ be the constant such that $\nu I$ is the projection of $E$ onto the line spanned by $I$. Since $\alpha = \co(\e)$, we deduce that $|E-\nu I|\leq |\nu I|\tan\alpha= \co(\e)$, i.e. $|P_{\e,k}-P_{\e,j}|=\nu+\co(\e)$ for all $1\leq k<j\leq n+1$.
Assumption \eqref{eq_sides} implies $\nu=\sqrt{2\frac{n+1}{n}}$.

We introduce vectors $P^p_{\e,k}\in \R^n$ for $p=1,\dots,n$ and $k=1,\dots,p+1$ which are defined as follows. For $p=n$ we set $P^n_{\e,k}:=P_{\e,k}$ for $k=1,\dots,n+1$, while for $p<n$ they are defined inductively on $p$ by the same recursive relation as in  \eqref{inductiverule}, that is, 
\begin{equation}\label{recursive-p}
P_{\e,k-1}^{p-1}:=\frac{p}{\sqrt{p^2-1}}\ton{P_{\e,k}^p+\frac{1}{p}P_{\e,1}^p},\quad k=2,\dots,p+1.
\end{equation}

\begin{lemma}
With notations above, we have that
\begin{align}\label{eq_lem2}
|P_{\e,1}^{1}+P_{\e,2}^{1}|=\co(\e).
\end{align}
Moreover for all $1\leq k<p\leq n$ and for all $1\leq j \leq k+1$ it holds
\begin{align}\label{zz4}
\langle P^{p}_{\e,1},P^k_{\e,j}\rangle = \co(\e),
\end{align}
and
\begin{align}\label{zz5}
\langle P^{p}_{\e,1},P^p_{\e,j}\rangle =\begin{cases} -\frac1p +\co(\e),&\text{if }j>1,\\
1+\co(\e),&\text{if }j=1,\end{cases}
\end{align}
where $\langle\cdot,\cdot\rangle$ is the standard scalar product of $\R^n$.
\end{lemma}

\begin{proof}
The defining relations \eqref{recursive-p} imply
\begin{align*}
\sum_{j=1}^{p}P_{\e,j}^{p-1}
&=\frac{p}{\sqrt{p^2-1}}\sum_{j=2}^{p+1}\ton{
P_{\e,j}^{p}+\frac1p P_{\e,1}^{p}}\\
&=\frac{p}{\sqrt{p^2-1}}\qua{\sum_{j=2}^{p+1}\ton{
P_{\e,j}^{p}}+ P_{\e,1}^{p}}
=\frac{p}{\sqrt{p^2-1}}\sum_{j=1}^{p+1}
P_{\e,j}^{p},
\end{align*}
for any $p=2,\dots,n$. Since $|G|=|\sum_{j=1}^{n+1}
P_{\e,j}^{n}|=\co(\e)$,
applying recursively the latter relation we get \eqref{eq_lem2}.

To prove \eqref{zz4}, first remark that
\begin{align}\label{zz6}
2\langle P^{n}_{\e,1},P^{n}_{\e,j+1}\rangle
&=|P^{n}_{\e,1}|^2+|P^{n}_{\e,j+1}|^2-|P^{n}_{\e,1}-P^{n}_{\e,j+1}|^2\\
&=2(1+\co(\e))-\ton{2\frac{n+1}{n}+\co(\e)}\nonumber\\
&=\frac{2}{n}+\co(\e),\nonumber
\end{align}
which in turn implies
\begin{align}\label{zz}
\langle P^{n}_{\e,1},P^{n-1}_{\e,j}\rangle &= \frac{n}{\sqrt{n^2-1}}\langle P^{n}_{\e,1},P^{n}_{\e,j+1} +\frac 1n P^{n}_{\e,1}\rangle\\\nonumber
&= \frac{|P^n_{\e,1}|^2}{\sqrt{n^2-1}}+ \frac {n\langle P^{n}_{\e,1},P^{n}_{\e,j+1}\rangle}{\sqrt{n^2-1}}= \co(\e).
\end{align}
Moreover, for all $k,q\in\{1,\dots, n\}$ and all $1\leq j\leq k+1$, 
\begin{align}\label{zz2}
\langle P^{q}_{\e,1},P^{k-1}_{\e,j-1}\rangle &= \frac{k}{\sqrt{k^2-1}}\langle P^{q}_{\e,1},P^{k}_{\e,j} +\frac 1k P^{k}_{\e,1}\rangle\\
&=\frac{k}{\sqrt{k^2-1}}\langle P^{q}_{\e,1},P^{k}_{\e,j}\rangle+\frac{1}{\sqrt{k^2-1}}\langle P^{q}_{\e,1},P^{k}_{\e,1}\rangle.\nonumber
\end{align}
Applying this latter recursively with $q=n$ and $k=p+1,\dots,n$ gives, together with \eqref{zz},
\begin{align}\label{zz3}
\langle P^{n}_{\e,1},P^{p}_{\e,j}\rangle = \co(\e) \end{align}
for all $1\leq p\leq n-1$ and all $1\leq j\leq p$. Applying again \eqref{zz2} with $q=p$ and $k\leq n+1$, using also \eqref{zz3}, gives \eqref{zz4}.
%
Finally, we prove \eqref{zz5} once again by induction on $p$. By \eqref{lenghtpj} and \eqref{zz6}, \eqref{zz5} is verified for $p=n$. Now, suppose that \eqref{zz5} is verified for some $p\leq n$. Then
\begin{align*}
\langle P^{p-1}_{\e,j-1},P^{p-1}_{\e,k-1}\rangle
&=\frac{p^2}{p^2-1}\langle P^{p}_{\e,j}+\frac1pP^{p}_{\e,1} , P^{p}_{\e,k}+\frac1pP^{p}_{\e,1}\rangle\\
&=\frac{p^2}{p^2-1}\ton {\langle P^{p}_{\e,j}, P^{p}_{\e,k}\rangle+\frac{1}{p}\ton{\langle P^{p}_{\e,j}, P^{p}_{\e,k}\rangle+\langle \frac1pP^{p}_{\e,1} , \frac1pP^{p}_{\e,1}\rangle}+\frac{1}{p^2}|P^{p}_{\e,1}|^2}\\
&=\frac{p^2}{p^2-1}\ton {\langle P^{p}_{\e,j}, P^{p}_{\e,k}\rangle-\frac{1}{p^2}+\co(\e)},
\end{align*}
so that by the inductive assumption
\begin{align*}
\langle P^{p-1}_{\e,j-1},P^{p-1}_{\e,k-1}\rangle  =\begin{cases} -\frac1{p-1} +\co(\e),&\text{if }j\neq k,\\
1+\co(\e),&\text{if }j=k.\end{cases}
\end{align*}
\end{proof}

An application of the following Lemma with $p=n$ will conclude the proof of Proposition \ref{lem_lil-sta}.

\begin{lemma}
For every $1\leq p\leq n$, there exists an isometry $A_p\in O(n)$ of $B^{\R^n}_1$ satisfying
\begin{align}\label{recursive-A}
P^q_{\e,j}=A_pS^q_{j}+\co(\e),\qquad \forall\;1\leq q\leq p,\ \forall\;1\leq j\leq q+1.
\end{align}
\end{lemma}

\begin{proof}
We proceed by induction on $p$. Let $A_1\in O(n)$ be any isometry of $B^{\R^n}_1$ satisfying $A_1S^n_1=\frac{P^1_{\e,1}}{|P^1_{\e,1}|}$. According to \eqref{lenghtpj}, $P^1_{\e,1}=A_1S^1_1+\co(\e)$. Using \eqref{eq_lem2}, we get
\begin{equation}
P^1_{\e,2}=-P^1_{\e,1}+\co(\e)=-A_1S^1_1+\co(\e)=A_1S^1_2+\co(\e).
\end{equation}
In particular \eqref{recursive-A} is satisfied for $p=1$.

Now, suppose that \eqref{recursive-A} is satisfied for some $1\leq p<n$. Composing $A_p$ with a suitable further isometry $A_p'$, we can find a new isometry $A_{p+1}\in O(n)$ such that \eqref{recursive-A} is satisfied for $p+1$ instead of $p$. Namely, recall that the vectors $A_pS^q_{j}$, with $1\leq q\leq p$ and $1\leq j\leq q+1$ are all contained in a $p$-dimensional hyperplane $\mathcal H_p$of $\R^n$. Moreover $A_pS^{p+1}_{1}\in \mathcal H^\perp_p<\R^n$. Then one can take the isometry $A'_p\in O(n)$ which fix $\mathcal H_p$ (i.e. $A'_p\in I_{\R^p}\otimes O(n-p)$, where $O(n-p)=\mathrm {Iso}(\mathcal H^\perp_p)$) and such the projection of $P^{p+1}_{\e,1}$ onto the $(n-p)$-dimensional hyperplane $\mathcal H^\perp_p<\R^n$ is parallel to $A'_{p}A_pS^{p+1}_1$. Explicitly one has that for each $\e$ there exists $A'_p\in O(n)$ and a unique positive $\alpha$ such that
\begin{align*}
A_p'A_pS^q_{j}=A_pS^q_{j},\qquad \forall\;1\leq q\leq p,\ \forall\;1\leq j\leq q+1,
\end{align*}
and
\begin{align*}
\alpha A_p'A_pS^{p+1}_{1} &= P^{p+1}_{\e,1} - \pi_{\operatorname{span} \{A_pS^j_1\}_{j=1}^p}P^{p+1}_{\e,1}\\
&=P^{p+1}_{\e,1} - \sum_{j=1}^p \langle P^{p+1}_{\e,1},A_pS^j_1\rangle A_pS^j_1.
\end{align*}
According to \eqref{recursive-A}, using also \eqref{zz4} and \eqref{zz5}, we deduce
\begin{align*}
\abs{\alpha A_p'A_pS^{p+1}_{1} - P^{p+1}_{\e,1}}
&=\abs{\sum_{j=1}^p \langle P^{p+1}_{\e,1},P^{j}_{\e,1}+\co(\e)\rangle (P^{j}_{\e,1}+\co(\e))}\\
&=\abs{\sum_{j=1}^p \langle P^{p+1}_{\e,1},P^{j}_{\e,1}\rangle P^{j}_{\e,1}+\co(\e)}\\
&=\co(\e).
\end{align*}
Again by \eqref{zz5} we have also that $\alpha=\alpha(\e)=1+\co(\e)$. Setting $A_{p+1}:=A_p'A_p$, we have proved \eqref{recursive-A} for $p+1$, hence recursively for every $1\leq p\leq n$.
\end{proof}
\end{proof}

\bigskip
We now come back to the proof of the main theorem.

\begin{proof}[Proof (of Theorem \ref{th_main})]
Fix a point $x_0\in M$ and let $\epsilon_0$ small enough so that for all $0<\e<\e_0$, $\phi_\e=\exp_{x_0}(\e\cdot):\bar B_1^{\R^n}(0)\to \bar B_\e^M(x_0)$ is a diffeomorphism.

For every $\e\in (0,\e_0)$, let $\{Q_{\e,j}\}_{j=1}^{n+1}$ be an $(n+1)$-extender of $\bar B_\e^M(x_0)$, i.e.
$$
\frac{n(n+1)}{2}\xt(\bar B_\e^M(x_0))=\sum_{1\leq k<j\leq n+1} d_g(Q_{\e,k},Q_{\e,j}).$$ 
For $k=1,\dots,n$, define $P_{\e,k}:=\phi_\e^{-1}(Q_{\e,k})$.

We need the following

\begin{lemma}\label{lem_br}
Fix a point $x_0\in M$ and let $\epsilon_0$ small enough so that for all $0<\e<\e_0$, $\phi_\e=\exp_{x_0}(\e\cdot):\bar B_1^{\R^n}(0)\to \bar B_\e^M(x_0)$ is a diffeomorphism. Then for every distinct points $p_1,p_2\in \bar B_1^{\R^n}(0)$ we have
\begin{align*}
d_g(\phi_\e(p_1),\phi_\e(p_2))&=\e|p_1-p_2|-\frac{\e^3}{6|p_1-p_2|}R_{x_0}(p_1,p_2,p_2,p_1)+\co(\e^4),
\end{align*}
where $R_{x_0}$ is the Riemann tensor of $(M,g)$ at $x_0$.
Here and on, $\R^n$ is canonically identified with $T_{x_0}M$. \end{lemma}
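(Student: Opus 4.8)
The statement is a classical normal-coordinate computation (cf.\ \cite{Br-CQG,Gallot}); here is the line of argument I would follow. The plan is to pull everything back to a \emph{fixed} Euclidean ball by $\phi_\e$ and treat the pulled-back metric as an $\co(\e^2)$-perturbation of the flat metric $g_0$. First I would record the standard Taylor expansion of $g$ in geodesic normal coordinates at $x_0$: for $y$ in a fixed small ball and every $v$,
\[
g_y(v,v)=\tfrac{}{}\abs v^2-\tfrac13 R_{x_0}(y,v,v,y)+\co(\abs y^3),
\]
the remainder being smooth in $(y,v)$ and quadratic in $v$. Setting $\hat g_{\e,p}(v,v):=g_{\e p}(v,v)=\abs v^2-\tfrac{\e^2}{3}R_{x_0}(p,v,v,p)+\co(\e^3)$ — a metric on a fixed ball $\bar B_R^{\R^n}(0)$ ($R$ large enough, $\e_0$ correspondingly small), with remainder bounded in $C^\infty$ uniformly in $p$ — one has $\bigl(\exp_{x_0}(\e\,\cdot)\bigr)^*g=\e^2\hat g_\e$, hence $d_g(\phi_\e(p_1),\phi_\e(p_2))=\e\,d_{\hat g_\e}(p_1,p_2)$. (Enlarging the chart from $\bar B_1$ to $\bar B_R$ is harmless: $d_g(\phi_\e(p_1),\phi_\e(p_2))\le 2\e$ forces every minimizing $g$-geodesic between the two points to stay well inside the normal chart.) It thus remains to expand $d_{\hat g_\e}(p_1,p_2)$ to order $\e^2$.

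For the upper bound, let $\gamma_0(t)=p_1+t(p_2-p_1)$, $t\in[0,1]$, be the Euclidean segment and $w:=p_2-p_1$, so $\abs w=\abs{p_1-p_2}>0$. Taylor expanding $\sqrt{\hat g_{\e,\gamma_0(t)}(w,w)}=\abs w-\tfrac{\e^2}{6\abs w}R_{x_0}(\gamma_0(t),w,w,\gamma_0(t))+\co(\e^3)$ uniformly in $t$ and integrating gives
\[
d_{\hat g_\e}(p_1,p_2)\le L_{\hat g_\e}(\gamma_0)=\abs w-\frac{\e^2}{6\abs w}\int_0^1 R_{x_0}(\gamma_0(t),w,w,\gamma_0(t))\,dt+\co(\e^3).
\]
For the matching lower bound I would exploit that perturbing the path is of higher order. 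Let $\gamma_\e$ be a $\hat g_\e$-minimizing geodesic from $p_1$ to $p_2$; since the Christoffel symbols of $\hat g_\e$ are $\co(\e^2)$ (they vanish at $\e=0$, where $g_0$ is flat) and the Euclidean segment is the unique, non-degenerate, flat minimizer, $\gamma_\e$ is $C^1$-close to $\gamma_0$ with $\|\gamma_\e-\gamma_0\|_{C^1}=\co(\e^2)$. Using $L_{g_0}(\gamma_\e)\ge\abs w$ and the same expansion of $\sqrt{\hat g_\e}$ along $\gamma_\e$,
\[
d_{\hat g_\e}(p_1,p_2)=L_{\hat g_\e}(\gamma_\e)\ge\abs w-\frac{\e^2}{6\abs w}\int_0^1 R_{x_0}\bigl(\gamma_\e(t),\dot\gamma_\e(t),\dot\gamma_\e(t),\gamma_\e(t)\bigr)\,dt+\co(\e^3),
\]
and the $C^1$-closeness lets me replace $\gamma_\e$ by $\gamma_0$ in the integrand at the cost of $\co(\e^2)$, which is absorbed after multiplication by $\e^2$. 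The two bounds agree, so $d_{\hat g_\e}(p_1,p_2)=\abs w-\tfrac{\e^2}{6\abs w}\int_0^1 R_{x_0}(\gamma_0(t),w,w,\gamma_0(t))\,dt+\co(\e^3)$.

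It remains to evaluate the curvature integral. Expanding slots $1$ and $4$ of $R_{x_0}(\gamma_0(t),w,w,\gamma_0(t))$ along $\gamma_0(t)=p_1+tw$, every term containing $tw$ vanishes by the antisymmetry of $R$ in its first two (resp.\ last two) arguments, so the integrand is the constant $R_{x_0}(p_1,w,w,p_1)$; expanding $w=p_2-p_1$ in slots $2$ and $3$ and using the same antisymmetries, $R_{x_0}(p_1,w,w,p_1)=R_{x_0}(p_1,p_2,p_2,p_1)$. Hence $d_{\hat g_\e}(p_1,p_2)=\abs{p_1-p_2}-\tfrac{\e^2}{6\abs{p_1-p_2}}R_{x_0}(p_1,p_2,p_2,p_1)+\co(\e^3)$, and multiplying by $\e$ gives the claimed formula. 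The one point that is more than bookkeeping is the lower bound: I genuinely need the minimizing $\hat g_\e$-geodesic to be $C^1$-close to the straight segment at rate $\co(\e^2)$, so that trading it for the segment costs only $\co(\e^4)$ after the overall rescaling. This rests on smooth dependence of the geodesic ODE on the metric together with non-degeneracy of the Euclidean minimizer; everything else reduces to Taylor expansion and the algebraic symmetries of the Riemann tensor.
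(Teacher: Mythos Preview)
Your argument is correct. The paper's own proof is much terser: it simply quotes the known expansion
\[
d_g^2(\phi_\e(p_1),\phi_\e(p_2))=\e^2|p_1-p_2|^2-\tfrac{\e^4}{3}R_{x_0}(p_1,p_2-p_1,p_2-p_1,p_1)+\co(\e^5)
\]
from \cite{Gallot,Br-CQG} and takes a square root, which immediately yields the statement after the same antisymmetry reduction you carry out at the end. By contrast, you rederive the expansion from scratch by pulling back to a fixed ball, expanding the metric in normal coordinates, and sandwiching the distance between the $\hat g_\e$-length of the Euclidean segment and that of the true $\hat g_\e$-geodesic. This is a genuinely different (and more self-contained) route: the paper outsources the analytic work to the cited references, whereas you make explicit the one nontrivial step, namely that the minimizing $\hat g_\e$-geodesic is $C^1$-close to the segment at rate $\co(\e^2)$ so that the curvature integrand can be transported onto $\gamma_0$ without loss. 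The paper's approach is shorter and avoids the geodesic-perturbation argument entirely (the squared distance is a smooth function of the endpoints, so one never needs to locate the minimizer); your approach has the virtue of being independent of the literature and of making transparent exactly where the $\co(\e^4)$ remainder comes from.
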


\begin{proof}
This follows easily from the asymptotic formula
\begin{equation}
d_g^2(\phi_\e(p_1),\phi_\e(p_2))=\e^2|p_1-p_2|^2-\frac{\e^4}{3}R_{x_0}(p_1,p_2-p_1,p_2-p_1,p_1)+\co(\e^5),
\end{equation}
which can be proved via explicit computations; see for instance \cite{Gallot,Br-CQG}.
\end{proof}

Lemma \ref{lem_br} gives that

\begin{align}\label{xt_form}
\frac{n(n+1)}{2}\xt(\bar B_\e^M(x_0))&=\e\sum_{1\leq k<j\leq n+1}\left|P_{\e,k}-P_{\e,j}\right| \\
&-  \frac{\e^3}{6}\sum_{1\leq k<j\leq n+1}\frac {R_{x_0}(P_{\e,k},P_{\e,j},P_{\e,j},P_{\e,k})}{|P_{\e,k}-P_{\e,j}|}+\co(\e^4).\nonumber
\end{align}
Note that 
\begin{align*}
\frac {R_{x_0}(P_{\e,k},P_{\e,j},P_{\e,j},P_{\e,k})}{|P_{\e,k}-P_{\e,j}|}
&=\frac {R_{x_0}(P_{\e,k}-P_{\e,j},P_{\e,j},P_{\e,j},P_{\e,k}-P_{\e,j})}{|P_{\e,k}-P_{\e,j}|}\\
&=\frac{Sect_{x_0}(P_{\e,k}\wedge P_{\e,j})\qua{|P_{\e,k}-P_{\e,j}|^2|P_{\e,j}|^2-\left\langle P_{\e,j},P_{\e,k}-P_{\e,j}\right\rangle^2}}{|P_{\e,k}-P_{\e,j}|}
\\
&\leq2 |Sect_{x_0}(P_{\e,k}\wedge P_{\e,j})|,
\end{align*}
since $P_{\e,k}\in \bar B_1^{\R^n}$ for all $k=1,\dots,n$. We get in particular that
\begin{align}\label{xt_form2}
\left| \xt(\bar B_\e^M(x_0))-\e\frac{2}{n(n+1)}\sum_{1\leq k<j\leq n+1}\left|P_{\e,k}-P_{\e,j}\right|\right| \leq
\frac{1}{3}\left\|Sect_{x_0}\right\|\e^3+\co(\e^4).
\end{align}

Now, let $\mathcal S^n$ be the regular $(n+1)$-simplex of $\R^n$ introduced in \eqref{simplex}. Consider $n+1$ points $\{T^n_{\e,k}\}_{k=1}^{n+1}$ in $\bar B_\e^M(x_0)$ defined by $T^n_{\e,k}=\phi_\e(S^n_k)$. 
Again by Lemma \ref{lem_br}, for every $1\leq k<j\leq n+1$
\begin{align}\label{eq_br}
d_g(T^n_{\e,k},T^n_{\e,j})&=\e|S^n_k-S^n_j|-\frac{\e^3}{6|S^n_k-S^n_j|}R_{x_0}(S^n_k,S^n_j,S^n_j,S^n_k)+\co(\e^4)\\
&=\e\sqrt{2\frac{n+1}{n}}-\frac{\e^3}{6}\sqrt{\frac{n}{2(n+1)}}R_{x_0}(S^n_k,S^n_j,S^n_j,S^n_k)+\co(\e^4).\nonumber
\end{align}
This latter, together with \eqref{xt_form2} and Lemma \ref{lem_lil}, yield
\begin{align}
\sum_{1\leq k<j\leq n+1}\left|S^n_k-S^n_j\right|
&\leq \e^{-1}\sum_{1\leq k<j\leq n+1} d_g(T^n_{\e,k},T^n_{\e,j})+\co(\e^2)\\\nonumber
&\leq \e^{-1}\frac{n(n+1)}{2}\xt(\bar B_\e^M(x_0)) +\co(\e^2)\\\nonumber
&\leq \sum_{1\leq k<j\leq n+1}\left|P_{\e,k}-P_{\e,j}\right|+\co(\e^2)\\\nonumber
&\leq \frac{n(n+1)}{2}\xt(\bar B_1^{\R^n})+\co(\e^2)\\\nonumber
&=\sum_{1\leq k<j\leq n+1}\left|S^n_k-S^n_j\right|+\co(\e^2).
\end{align}
In particular, 
$$
\sum_{1\leq k<j\leq n+1}\left|P_{\e,k}-P_{\e,j}\right|= \sqrt{\frac{n(n+1)^3}{2}}+\co(\e^2).
$$
%
%
%
%
%
We can thus apply Proposition \ref{lem_lil-sta} to deduce
\begin{align}\label{approx-P}
&P_{\e,k}=A(\e)S^n_k+\co(\e),\quad\forall 1\leq k\leq n+1.
\end{align}
for some function $A:(0,\e_0)\to O(n)$ taking values in the isometries group of $\bar B_1^{\R^n}$.


We are going to use the following Lemma, which will be proved later.

\begin{lemma}\label{lem_normal}
Let $\mathcal S^n=\{S^n_k\}_{k=1}^{n+1}\in \ton{\R^n}^{n+1}$ be any regular $(n+1)$-simplex inscribed in $\bar B_1^{\R^n}(0)$. Then
\begin{equation} \label{eq_average}
\sum_{1\leq k<j\leq n+1}R_{x_0}(S^n_k,S^n_j,S^n_j,S^n_k)=\frac{(n+1)^2}{2n^2}\mathrm{Scal}_g(x_0).
\end{equation}
\end{lemma}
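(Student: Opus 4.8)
The plan is to reduce the sum on the left-hand side of \eqref{eq_average} to a full contraction of the Riemann tensor $R_{x_0}$ against the ``inertia matrix'' $\sum_{k}S^n_k(S^n_k)^{T}$ of the simplex, which will turn out to be a scalar multiple of the identity; the right-hand side then appears through the standard expression of the scalar curvature as the trace of the Ricci tensor.

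First I would record the elementary properties of a regular $(n+1)$-simplex $\{S^n_k\}_{k=1}^{n+1}$ inscribed in $\partial B_1^{\R^n}(0)$: each $|S^n_k|=1$, the barycenter is the origin, $\sum_{k=1}^{n+1}S^n_k=0$, and hence (expanding $\left|\sum_{k}S^n_k\right|^2=0$) one gets $\langle S^n_k,S^n_j\rangle=-\tfrac1n$ whenever $k\neq j$. Next, using the symmetries of the Riemann tensor — in particular $R_{x_0}(u,v,v,u)=R_{x_0}(v,u,u,v)$ together with $R_{x_0}(u,u,u,u)=0$ — I would rewrite the left-hand side of \eqref{eq_average} as $\tfrac12\sum_{k,j=1}^{n+1}R_{x_0}(S^n_k,S^n_j,S^n_j,S^n_k)$. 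Expanding each vertex in a fixed $g$-orthonormal basis $\{e_a\}_{a=1}^n$ of $T_{x_0}M\cong\R^n$, say $S^n_k=\sum_a (S^n_k)^a e_a$, multilinearity yields
\[
\sum_{1\leq k<j\leq n+1}R_{x_0}(S^n_k,S^n_j,S^n_j,S^n_k)=\frac12\sum_{a,b,c,d=1}^n M_{ad}\,M_{bc}\,R_{x_0}(e_a,e_b,e_c,e_d),
\]
where $M=(M_{ad})$ is the symmetric $n\times n$ matrix $M_{ad}=\sum_{k=1}^{n+1}(S^n_k)^a(S^n_k)^d$.

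The only step that is more than bookkeeping is the identity $M=\tfrac{n+1}{n}I_n$. I would prove it as follows: writing $\Sigma$ for the $n\times(n+1)$ matrix whose columns are the $S^n_k$, we have $M=\Sigma\Sigma^{T}$, while the Gram matrix $\Sigma^{T}\Sigma$ has entries $\langle S^n_k,S^n_j\rangle$, so $\Sigma^{T}\Sigma=\tfrac{n+1}{n}\left(I_{n+1}-\tfrac1{n+1}J\right)$ with $J$ the all-ones matrix; this is $\tfrac{n+1}{n}$ times the orthogonal projection onto $\mathbf 1^{\perp}\subset\R^{n+1}$, hence has eigenvalue $\tfrac{n+1}{n}$ with multiplicity $n$ and eigenvalue $0$ with multiplicity $1$. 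Since $\Sigma\Sigma^{T}$ and $\Sigma^{T}\Sigma$ share the same nonzero spectrum and $\Sigma\Sigma^{T}$ is $n\times n$, we conclude $M=\Sigma\Sigma^{T}=\tfrac{n+1}{n}I_n$. (Equivalently, $M$ commutes with the $O(n)$-action permuting the vertices of the simplex, which is irreducible on $\R^n$, and $\tr M=\sum_k|S^n_k|^2=n+1$.)

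Finally, substituting $M_{ad}=\tfrac{n+1}{n}\delta_{ad}$ and $M_{bc}=\tfrac{n+1}{n}\delta_{bc}$ into the displayed identity gives
\[
\sum_{1\leq k<j\leq n+1}R_{x_0}(S^n_k,S^n_j,S^n_j,S^n_k)=\frac12\left(\frac{n+1}{n}\right)^2\sum_{a,b=1}^n R_{x_0}(e_a,e_b,e_b,e_a)=\frac{(n+1)^2}{2n^2}\mathrm{Scal}_g(x_0),
\]
since $\mathrm{Scal}_g(x_0)=\sum_{a,b=1}^n R_{x_0}(e_a,e_b,e_b,e_a)$ for any $g$-orthonormal basis (the scalar curvature is $\tr\ric$). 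This is exactly \eqref{eq_average}. The main obstacle is the linear-algebra identity $\sum_{k}S^n_k(S^n_k)^{T}=\tfrac{n+1}{n}I_n$; once it is established, the rest is just the symmetries of $R_{x_0}$ and the definition of the scalar curvature.
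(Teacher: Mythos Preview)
Your proof is correct and genuinely different from the paper's argument. The paper proves the lemma by induction on $n$, using the explicit recursive construction of the simplex from \eqref{inductiverule}: it first establishes the auxiliary identity $\sum_{k=1}^{p+1}\kappa(S^n_1,S^p_k)=\frac{p+1}{p}\sum_{j=1}^p\sect_{x_0}(e_n\wedge e_j)$ (Lemma \ref{1stfact}, itself proved by induction on $p$), and then decomposes the full sum over $k<j$ through the recursion \eqref{inductiverule}, carefully tracking cross terms via the multilinearity of $R_{x_0}$ and the vanishing $\sum_k S^{p-1}_k=0$. You instead bypass all of this by observing that the double sum is a full contraction of $R_{x_0}$ against $M\otimes M$ with $M=\sum_k S^n_k(S^n_k)^T$, and then proving the single linear-algebra fact $M=\tfrac{n+1}{n}I_n$ via the Gram-matrix spectrum (equivalently, the simplex vertices form a tight frame). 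Your route is shorter, coordinate-free, and applies uniformly to any regular inscribed simplex without appealing to the specific construction \eqref{inductiverule}; the paper's inductive approach is longer but yields along the way the finer decomposition of the sum into sectional curvatures along the coordinate planes $e_n\wedge e_j$.
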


%
Since for $1\leq k\leq n+1$, $P_{\e,k}\in \bar B_1^{\R^n}(0)$, Lemma \ref{lem_lil} gives that 
\[
\sum_{1\leq k<j\leq n+1}\left|P_{\e,k}-P_{\e,j}\right|\leq \frac{n(n+1)}{2}\xt(\bar B_1^{\R^n}(x_0))= \sum_{1\leq k<j\leq n+1}\left|S^n_{k}-S^n_{j}\right|.
\]
Moreover, according to \eqref{approx-P} and Lemma \ref{lem_normal},
\begin{align*}
&\sum_{1\leq k<j\leq n+1}\frac {R_{x_0}(P_{\e,k},P_{\e,j},P_{\e,j},P_{\e,k})}{|P_{\e,k}-P_{\e,j}|}\\
&= \sum_{1\leq k<j\leq n+1}\frac {R_{x_0}(A(\e)S^n_{k}+\co(\e),
A(\e)S^n_{j}+\co(\e),A(\e)S^n_{j}+\co(\e),A(\e)S^n_{k}+\co(\e))}
{|A(\e)S^n_{k}-A(\e)S^n_{j}|+\co(\e)}\\
&=\sum_{1\leq k<j\leq n+1}\frac {R_{x_0}(A(\e)S^n_{k},
A(\e)S^n_{j},A(\e)S^n_{j},A(\e)S^n_{k})}
{|A(\e)S^n_{k}-A(\e)S^n_{j}|}+\co(\e)\\
&=\sqrt{\frac{n}{2(n+1)}}\sum_{1\leq k<j\leq n+1}R_{x_0}(A(\e)S^n_{k},
A(\e)S^n_{j},A(\e)S^n_{j},A(\e)S^n_{k})+\co(\e)\\
&=\sqrt{\frac{n}{2(n+1)}}\frac{(n+1)^2}{2n^2}\scal_g(x_0)\end{align*}
Inserting these latter in \eqref{xt_form} we get 
\begin{align}\label{eq_fin1}
\frac{n(n+1)}{2}\xt(\bar B_\e^M(x_0))
&\leq \e\sum_{1\leq k<j\leq n+1}\left|S^n_{k}-S^n_{j}\right| -\frac {\e^3}{12}\sqrt{\frac{(n+1)^3}{2n^3}}\scal_g(x_0)+\co(\e^4)\end{align}
On the other hand, from \eqref{eq_br} and Lemma \ref{lem_normal}, we have also
\begin{align}\label{eq_fin2}
\frac{n(n+1)}{2}\xt(\bar B_\e^M(x_0))
&\geq \sum_{1\leq k<j\leq n+1} d_g(T^n_{\e,k},T^n_{\e,k})\\&=
\e \sum_{1\leq k<j\leq n+1}\left|S^n_{k}-S^n_{j}\right| -\frac {\e^3}{12}\sqrt{\frac{(n+1)^3}{2n^3}}\scal_g(x_0)+\co(\e^4).\nonumber\end{align}
The asymptotic estimates \eqref{eq_fin1} and \eqref{eq_fin2}, together with Lemma \ref{lem_lil}, conclude the proof of Theorem \ref{th_main}.
\end{proof}

It remains to prove Lemma \ref{lem_normal}.

\begin{proof}[Proof (of Lemma \ref{lem_normal})]
For $p=1,\dots,n$ and $k=1,\dots,p+1$, define unitary vectors $S^p_k\in\R^n$ as in  \eqref{inductiverule}, and fix an orthonormal basis of $T_{x_0}M=\R^n$ by setting $e_j=S^j_1$ for $1\leq j\leq n$. 

We are going to prove by induction on $n$ that
$$\sum_{1\leq k<j\leq n+1}R_{x_0}(S^n_k,S^n_j,S^n_j,S^n_k)=\frac{(n+1)^2}{n^2}\summ 1{n}Sect(x_0)(e_k\wedge e_j)
.$$
For the shortness of notation, for vectors $X,Y\in\R^n$, we set
$$
\kappa(X,Y):=R_{x_0}(X,Y,Y,X).
$$
For $n=2$, for every $1\leq k<j\leq 3$, $$\kappa(S^2_k,S^2_j)= Sect_{x_0} \left(|S^2_k|^2|S^2_j|^2-\langle S^2_k,S^2_j\rangle^2\right)= (1 -\frac{1}{4})Sect_{x_0} = \frac{3}{4}Sect_{x_0},$$ 
so that 
\begin{equation}\label{initialisation}
\sum_{1\leq k<j\leq 3}\kappa(S^2_k,S^2_j)=\frac{9}{4}Sect_{x_0}=\frac{9}{4}Sect_{x_0}(e_1,e_2).\end{equation}

\begin{lemma}\label{1stfact}
For every $1\leq p \leq n-1$, one has
\begin{equation}\label{1st-ind}
\sum_{k=1}^{p+1}\ka n1pk=\frac{p+1}{p}\sum_{j=1}^p Sect_{x_0}(e_n\wedge e_j).
\end{equation}
\end{lemma}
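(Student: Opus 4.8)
The plan is to prove \eqref{1st-ind} by induction on $p$, using only the recursion \eqref{inductiverule} defining the vectors $S^p_k$ together with two elementary features of Euclidean regular simplices. The observation I would record at the outset is that each $S^p_k$ is a unit vector lying in $\operatorname{span}\{e_1,\dots,e_p\}$; since $p\le n-1$, it is therefore orthogonal to $S^n_1=e_n$, so that
\[
\ka n1pk=R_{x_0}(e_n,S^p_k,S^p_k,e_n)=\sect_{x_0}(e_n\wedge S^p_k)
\]
for every $k$. Thus the claim is, in substance, an averaging identity for the quadratic form $Y\mapsto R_{x_0}(e_n,Y,Y,e_n)$ on the hyperplane $\operatorname{span}\{e_1,\dots,e_p\}$: the sum of its values at the vertices of a regular simplex inscribed in the unit sphere of that hyperplane equals $\frac{p+1}{p}$ times its trace.

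\textbf{Base case.} For $p=1$ one has $\{S^1_1,S^1_2\}=\{e_1,-e_1\}$, and by the symmetries of the curvature tensor $R_{x_0}(e_n,-e_1,-e_1,e_n)=R_{x_0}(e_n,e_1,e_1,e_n)$, whence $\sum_{k=1}^{2}\ka n1{1}k=2\,\sect_{x_0}(e_n\wedge e_1)$, which is \eqref{1st-ind} for $p=1$.

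\textbf{Inductive step.} Assume \eqref{1st-ind} holds for $p-1$, i.e.
\[
\sum_{j=1}^{p}R_{x_0}(e_n,S^{p-1}_j,S^{p-1}_j,e_n)=\frac{p}{p-1}\sum_{j=1}^{p-1}\sect_{x_0}(e_n\wedge e_j).
\]
Write $S^p_1=e_p$, and for $k=2,\dots,p+1$ substitute \eqref{inductiverule} in the form $S^p_k=-\tfrac1p e_p+\tfrac{\sqrt{p^2-1}}{p}S^{p-1}_{k-1}$ (with $e_p\perp S^{p-1}_{k-1}$) into $R_{x_0}(e_n,\,\cdot\,,\,\cdot\,,e_n)$; expanding the resulting quadratic expression and summing over $k=2,\dots,p+1$, three contributions appear. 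The cross term is proportional to $R_{x_0}\big(e_n,e_p,\sum_{j=1}^{p}S^{p-1}_j,e_n\big)$, which vanishes because the barycenter $\sum_{j=1}^{p}S^{p-1}_j$ of the regular simplex $\{S^{p-1}_j\}_{j=1}^{p}$ inscribed in a unit sphere centered at the origin is $0$. The diagonal term equals $\tfrac{p^2-1}{p^2}\sum_{j=1}^{p}R_{x_0}(e_n,S^{p-1}_j,S^{p-1}_j,e_n)$, to which the inductive hypothesis applies, producing $\tfrac{p+1}{p}\sum_{j=1}^{p-1}\sect_{x_0}(e_n\wedge e_j)$. The remaining $\tfrac1{p^2}$-terms sum to $\tfrac1p R_{x_0}(e_n,e_p,e_p,e_n)$, and together with the term of index $k=1$, namely $R_{x_0}(e_n,e_p,e_p,e_n)=\sect_{x_0}(e_n\wedge e_p)$, they give $\tfrac{p+1}{p}\sect_{x_0}(e_n\wedge e_p)$. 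Collecting everything (and using $\tfrac{p^2-1}{p^2}\cdot\tfrac{p}{p-1}=\tfrac{p+1}{p}$) one obtains $\sum_{k=1}^{p+1}\ka n1pk=\tfrac{p+1}{p}\sum_{j=1}^{p}\sect_{x_0}(e_n\wedge e_j)$, which is \eqref{1st-ind} for $p$.

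I do not expect a genuine obstacle here: the only delicate point is the bookkeeping of the numerical constants and verifying that $\sect_{x_0}(e_n\wedge e_p)$ ends up with total coefficient $\tfrac{p+1}{p}$. As an alternative that dispenses with the induction entirely, let $A$ be the symmetric endomorphism of $\operatorname{span}\{e_1,\dots,e_p\}$ determined by $\langle AY,Y\rangle=R_{x_0}(e_n,Y,Y,e_n)$; then $\sum_{k=1}^{p+1}\ka n1pk=\operatorname{tr}\!\big(A\sum_{k=1}^{p+1}S^p_k\otimes S^p_k\big)$, and $\sum_{k=1}^{p+1}S^p_k\otimes S^p_k=\tfrac{p+1}{p}\,\mathrm{Id}$ since this operator commutes with the (irreducibly acting) symmetry group of the regular $(p+1)$-simplex and has trace $p+1$. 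This yields \eqref{1st-ind} at once.
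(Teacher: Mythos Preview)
Your inductive argument is correct and is essentially identical to the paper's own proof: both expand $\kappa(S^n_1,S^p_k)$ via the recursion \eqref{inductiverule}, kill the cross term using $\sum_{j=1}^{p}S^{p-1}_j=0$, and combine the $\tfrac1{p^2}$-terms with the $k=1$ term to produce the coefficient $\tfrac{p+1}{p}$ in front of $\sect_{x_0}(e_n\wedge e_p)$. Your closing alternative via $\sum_k S^p_k\otimes S^p_k=\tfrac{p+1}{p}\,\mathrm{Id}$ is a clean non-inductive shortcut that the paper does not use.
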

\begin{proof}
We prove this by induction on $p$. First, note that $\sum_{k=1}^pS_k^{p-1}=0$. Accordingly,
$$\sum_{k=2}^{p+1}R_{x_0}(S_1^n,S_1^p,S_{k-1}^{p-1},S_1^n)=0,$$ and for every $1\leq p<n$,
\begin{align*}
\sum_{k=1}^{p+1}\ka n1pk 
&=\ka n1p{1} + \sum_{k=2}^{p+1} \kappa(S_1^n,-\frac 1p S_1^p+ \frac{\sqrt{p^2-1}}{p} S^{p-1}_{k-1})\\
&= \ka n1p1+\frac p{p^2}\ka n1p1+ \frac{p^2-1}{p^2}\sum_{k=2}^{p+1}\ka n1{p-1}{k-1}\\
&=\left(\frac {p+1}p\right)\ka n1p1 + \frac{p^2-1}{p^2}\sum_{k=1}^{p}\ka n1{p-1}{k}\\
&=\left(\frac {p+1}p\right) Sect_{x_0}(e_n \wedge e_p) + \frac{p^2-1}{p^2}\sum_{k=1}^{p}\ka n1{p-1}{k}.
\end{align*}
Since 
$$
\sum_{k=1}^2\ka n11k = 2 Sect_{x_0}(S^n_1 \wedge S^1_1)=2 Sect_{x_0}(e_n \wedge e_1),$$
Lemma \ref{1stfact} is proved.
\end{proof}

We come back to the proof of Lemma \ref{lem_normal}. Observe that, since $\sum_{j=2}^{n+1}S_{j-1}^{n-1}=0$ and by the linearity of the Riemann tensor, one get that
\begin{align}\label{fact21}
&2\sum_{2\leq k<j\leq n+1} \ri n1{n-1}{k-1}{n-1}{j-1}n1\\
&=\sum_{2\leq k<j\leq n+1}\ton{ \ri n1{n-1}{k-1}{n-1}{j-1}n1+\ri n1{n-1}{j-1}{n-1}{k-1}n1}\nonumber\\
&=\sum_{k=2}^{n+1}\sum_{\substack{j=2\\j\neq k}}^{n+1}\ri n1{n-1}{k-1}{n-1}{j-1}n1 
= - \sum_{k=2}^{n+1}\ri n1{n-1}{k-1}{n-1}{k-1}n1,\nonumber
\end{align}
and
\begin{align}\label{fact22}
&\sum_{2\leq k<j\leq n+1}\ton{ \ri n1{n-1}{k-1}{n-1}{k-1}{n-1}{j-1}+\ri n1{n-1}{j-1}{n-1}{j-1}{n-1}{k-1}}\\\nonumber
&=\sum_{j=2}^{n+1}\sum_{\substack{k=2\\k\neq j}}^{n+1}\ri n1{n-1}{j-1}{n-1}{j-1}{n-1}{k-1}\\\nonumber
&=-\sum_{j=2}^{n+1}\ri n1{n-1}{j-1}{n-1}{j-1}{n-1}{j-1}=0.
\end{align}
Hence, we can compute
\begin{align*}
&\summ 1{n+1}\ka nknj\\\nonumber
&=\sum_{j=2}^{n+1}\ka n1nj + \summ 2{n+1}\ka nknj\\\nonumber
&=\sum_{j=2}^{n+1} \kappa(S_1^n,-\frac1n S_1^n + \frac{\sqrt{n^2-1}}{n} S^{n-1}_{j-1})\\\nonumber
&+ \summ 2{n+1}\kappa(-\frac1n S_1^n + \frac{\sqrt{n^2-1}}{n} S^{n-1}_{k-1},-\frac1n S_1^n + \frac{\sqrt{n^2-1}}{n} S^{n-1}_{j-1})\\\nonumber
&= \frac{n^2-1}{n^2}\sum_{j=2}^{n+1}\ka n1{n-1}{j-1}-2\frac{n^2-1}{n^4}\summ 2{n+1}\ri n1{n-1}{k-1}{n-1}{j-1}n1 \\\nonumber
&+ \frac{n^2-1}{n^4} \summ 2{n+1}\qua{\ka n1{n-1}{j-1}+ \ka n1{n-1}{k-1}}\\
&-\frac{(n^2-1)^{3/2}}{n^4}\summ 2{n+1}\qua{2\ri n1{n-1}{j-1}{n-1}{j-1}{n-1}{k-1}+2\ri n1{n-1}{k-1}{n-1}{k-1}{n-1}{j-1}}\\\nonumber
&+\frac{(n^2-1)^{2}}{n^4} \summ2{n+1} \ka{n-1}{k-1}{n-1}{j-1}.
\end{align*}
Inserting \eqref{fact21} and \eqref{fact22}, and applying Lemma \ref{1stfact} with $p=n-1$, we get 
\begin{align*}
&\summ 1{n+1}\ka nknj\\
&= \frac{(n+1)^2}{n^2}\sum_{j=2}^{n+1}\ka n1{n-1}{j-1}+\frac{(n^2-1)^{2}}{n^4} \summ2{n+1} \ka{n-1}{k-1}{n-1}{j-1}\\
&= \frac{(n+1)^2}{n^2}\qua{\sum_{j=1}^{n-1}Sect_{x_0}(e_n\wedge e_j) + \frac{(n-1)^2}{n^2} \summ 2{n+1}\ka {n-1}{k-1}{n-1}{j-1}}
\end{align*}
This latter, together with \eqref{initialisation}, proves Lemma \ref{lem_normal}.
\end{proof}

\subsection*{A possible generalization}

For real positive $p$, one could also define the $p^{th}$ order $q$-extent of a metric space $(X,d)$ as
$$
\mathrm{xt}^{(p)}_q(X):=\left(\begin{array}{c}q \\ 2\end{array}\right)^{-1}\sup_{(x_1,\dots,x_q)\in X^q}\sum_{1\leq i<j\leq q}(d_X(x_i,x_j))^p.
$$
It turns out that for $1\leq p< 2$, Theorem \ref{th_main} and Corollary \ref{coro_main} generalize to $\mathrm{xt}^{(p)}_q(X)$. Namely one has
\begin{theorem}\label{th_main_p}
Let $(M,g)$ be a smooth $n$-dimensional Riemannian manifold and $x\in \mathrm{int}(M)$. Then
$$
\mathrm{xt}^{(p)}_{n+1}(\bar B_\e^M(x))=\e^p\mathrm{xt}^{(p)}_{n+1}(\bar B_1^{\R^n})-\frac{p}{12 n^2}\left(\frac{2(n+1)}{n}\right)^{p/2}\scal_g(x)\e^{p+2}+\co(\e^{p+3}).
$$
\end{theorem}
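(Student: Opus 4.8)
The plan is to follow the same strategy used in the proof of Theorem \ref{th_main}, with only the power $p$ altering the bookkeeping. Fix $x_0\in M$ and the diffeomorphisms $\phi_\e=\exp_{x_0}(\e\cdot)$. For an $(n+1)$-extender $\{Q_{\e,k}\}$ of $\bar B_\e^M(x_0)$ with respect to the $p$-th power functional, set $P_{\e,k}:=\phi_\e^{-1}(Q_{\e,k})\in \bar B_1^{\R^n}$. Raising the expansion of Lemma \ref{lem_br} to the power $p$ gives
\begin{align*}
d_g(\phi_\e(p_1),\phi_\e(p_2))^p
&=\e^p|p_1-p_2|^p-\frac{p\,\e^{p+2}}{6|p_1-p_2|^{2-p}}R_{x_0}(p_1,p_2,p_2,p_1)+\co(\e^{p+3}),
\end{align*}
valid for distinct $p_1,p_2$, where I used $1\le p<2$ only to guarantee that the error term coming from $\co(\e^4)$ inside a $p$-th power is still controlled (here the Taylor expansion $(a+b)^p=a^p+pa^{p-1}b+\dots$ is applied with $a=\e|p_1-p_2|>0$ bounded away from $0$ on the extender, by the argument below).

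Next, exactly as in the proof of Theorem \ref{th_main}, one first shows the sides of the simplex $\{P_{\e,k}\}$ are almost those of a regular simplex. Using Lemma \ref{lem_br} and the trivial bound $|R_{x_0}(P_{\e,k},P_{\e,j},P_{\e,j},P_{\e,k})|/|P_{\e,k}-P_{\e,j}|^{2-p}\le 2\|\sect_{x_0}\|$ on $\bar B_1^{\R^n}$, one gets
\[
\Big|\,\xtt\text{-type functional}-\e^p\tfrac{2}{n(n+1)}\sum_{1\le k<j\le n+1}|P_{\e,k}-P_{\e,j}|^p\,\Big|\le \co(\e^{p+2}),
\]
(with the obvious $q=n+1$ version); here I need the Euclidean fact that among $(n+1)$-tuples in $\bar B_1^{\R^n}$ the sum $\sum|P_k-P_j|^p$ is maximized exactly at regular simplexes. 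Since $t\mapsto t^p$ is strictly concave for $1\le p<2$, this follows from the same argument as \cite[Lemma 3]{Li} / Lemma \ref{lem_lil} combined with the power-mean inequality, and its stable version (the analogue of Proposition \ref{lem_lil-sta}) is obtained by the identical compactness/quantitative argument, yielding $P_{\e,k}=A(\e)S^n_k+\co(\e)$ for some $A(\e)\in O(n)$ and all $k$. Comparing the extender with the fixed competitor $T^n_{\e,k}=\phi_\e(S^n_k)$, whose pairwise $p$-th power distances are computed directly from the displayed expansion, one sandwiches the $p$-th power extent between two quantities that agree to order $\e^{p+3}$.

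Finally, plugging $P_{\e,k}=A(\e)S^n_k+\co(\e)$ into the curvature term and using that $\kappa$ and $|\cdot|$ are continuous, together with Lemma \ref{lem_normal} (which gives $\sum_{1\le k<j\le n+1}R_{x_0}(S^n_k,S^n_j,S^n_j,S^n_k)=\frac{(n+1)^2}{2n^2}\scal_g(x_0)$ for any regular simplex, in particular for $A(\e)\mathcal S^n$), the curvature contribution becomes
\[
-\frac{p\,\e^{p+2}}{6}\Big(\tfrac{2(n+1)}{n}\Big)^{(p-2)/2}\cdot\frac{(n+1)^2}{2n^2}\scal_g(x_0),
\]
and dividing by $\binom{n+1}{2}=\frac{n(n+1)}{2}$ and simplifying gives the coefficient $-\frac{p}{12n^2}\big(\frac{2(n+1)}{n}\big)^{p/2}\scal_g(x)$, as claimed; the leading term is $\e^p\xt^{(p)}_{n+1}(\bar B_1^{\R^n})$ by definition of the $p$-th power extent of the Euclidean unit ball (realized by a regular simplex of side $\sqrt{2(n+1)/n}$). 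The main obstacle is the Euclidean maximization and its quantitative stability for the functional $\sum|P_k-P_j|^p$ when $1\le p<2$: one must check that the proof of Lemma \ref{lem_lil} and Proposition \ref{lem_lil-sta} (which relied on expanding $|P_k-P_j|^2$ into inner products) survives replacing the exponent $2$ by $p$; the restriction $p<2$ is exactly what keeps strict concavity working in the maximization, while $p\ge 1$ is needed so that $d^p$ is still (a power of) a distance and the Taylor expansion of $(\cdot)^p$ near a positive number is harmless, and one should note the case $p=2$ is genuinely different since then the curvature term and the stability argument degenerate.
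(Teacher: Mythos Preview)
Your overall strategy is exactly the paper's: expand $d_g^p$ via Lemma \ref{lem_br}, sandwich the $p$-extent between the extender and the test simplex $T^n_{\e,k}$, and use Lemma \ref{lem_normal} to identify the curvature coefficient; your constant computation is correct.

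Two points deserve correction or sharpening. First, the claim that ``$t\mapsto t^p$ is strictly concave for $1\le p<2$'' is false (it is convex for $p>1$); what you actually need is the power-mean/H\"older inequality $\sum a_k^p\le N^{1-p/2}\big(\sum a_k^2\big)^{p/2}$, which holds for $p\le 2$ with equality iff all $a_k$ coincide. This indeed reduces the Euclidean maximization to the $p=2$ identity of \cite{Li} and singles out regular simplexes. Second, you flag the quantitative stability of Proposition \ref{lem_lil-sta} as ``the main obstacle'' but leave it unresolved; the paper does not reuse the original argument verbatim (the identity \eqref{quant-lil} is specific to the linear functional) but instead replaces it by a \emph{quantitative} H\"older inequality (e.g.\ \cite[Theorem 2.2]{Ald}), obtaining
\[
\summ1{n+1}|P_{\e,k}-P_{\e,j}|^p\le\Big(\tfrac{n(n+1)}{2}\Big)^{1-p/2}\Big[\summ1{n+1}|P_{\e,k}-P_{\e,j}|^2\Big]\,\mathcal E,
\]
where $\mathcal E$ is an explicit deficit term vanishing exactly when all sides are equal. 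From this one reads off $|G|=\co(\e)$, $1-|P_{\e,k}|^2=\co(\e^2)$ and equal sides up to $\co(\e)$ precisely as in the $p=1$ case, after which the remainder of the proof of Proposition \ref{lem_lil-sta} applies unchanged. A pure compactness argument would not deliver the $\co(\e)$ rate you need for the $\co(\e^{p+3})$ remainder.
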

\begin{corollary}\label{coro_main_p}
Let $(M,g)$ be a smooth $n$-dimensional Riemannian manifold and $x\in \mathrm{int}(M)$. Then
\begin{equation}\label{eq_def-scal_p}
\mathrm{Scal}_g(x)= \lim_{\e\to 0}\frac{12 n^2}{p\e^{2}}
\left(1-\frac{\mathrm{xt}^{(p)}_{n+1}(\bar B_\e^M(x))}{\mathrm{xt}^{(p)}_{n+1}(\bar B_\e^{\R^n})}\right).
\end{equation}

\end{corollary}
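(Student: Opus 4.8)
The plan is to mimic the proof of Theorem~\ref{th_main} almost verbatim, replacing the sum of distances by the sum of their $p$-th powers and keeping track of where the exponent $p$ enters. First I would record the Euclidean input: the regular $(n+1)$-simplexes inscribed in $\bar B_1^{\R^n}$ realize $\mathrm{xt}^{(p)}_{n+1}(\bar B_1^{\R^n})$ for $1\le p<2$ as well, so that $\mathrm{xt}^{(p)}_{n+1}(\bar B_1^{\R^n})=\left(\frac{2(n+1)}{n}\right)^{p/2}$. For $p=1$ this is Lemma~\ref{lem_lil}; for $1<p<2$ one must check that among $(n+1)$-tuples in $\bar B_1^{\R^n}$ the regular simplex still maximizes $\sum_{k<j}|P_k-P_j|^p$ and that a stability statement analogous to Proposition~\ref{lem_lil-sta} holds, i.e. an $(n+1)$-tuple with $\sum_{k<j}|P_{\e,k}-P_{\e,j}|^p$ within $\co(\e^2)$ of the maximum is $\co(\e)$-close to a regular simplex. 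I expect this is the main obstacle: for $p=1$ the argument in~\cite{Li} exploits linearity (the Cauchy--Schwarz step producing the angle $\alpha$); for $1<p<2$ one instead uses strict concavity of $t\mapsto t^{p/2}$ together with the identity $\sum_{k<j}|P_k-P_j|^2=(n+1)\sum_k|P_k-G|^2\le (n+1)^2$, so that by Jensen $\sum_{k<j}|P_k-P_j|^p\le \binom{n+1}{2}\left(\tfrac{2(n+1)}{n}\right)^{p/2}$ with equality iff all $|P_k-P_j|$ are equal and $|G|=0$, $|P_k|=1$; quantitatively, a deficit of $\co(\e^2)$ forces $\mathrm{Var}(|P_k-P_j|^2)$, $|G|^2$ and $\sum(1-|P_k|^2)$ to be $\co(\e^2)$, hence each $|P_{\e,k}-P_{\e,j}|-\sqrt{2(n+1)/n}=\co(\e)$, and then the combinatorial argument in the two Lemmas inside the proof of Proposition~\ref{lem_lil-sta} applies unchanged to produce $A(\e)\in O(n)$ with $P_{\e,k}=A(\e)S^n_k+\co(\e)$.

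With the Euclidean stability in hand, I would run the Riemannian argument. Let $\{Q_{\e,j}\}$ be a $p$-extender of $\bar B_\e^M(x_0)$ and $P_{\e,k}=\phi_\e^{-1}(Q_{\e,k})$. Lemma~\ref{lem_br} gives $d_g(\phi_\e(p_1),\phi_\e(p_2))^p = \e^p|p_1-p_2|^p\left(1-\tfrac{\e^2}{6|p_1-p_2|^2}R_{x_0}(p_1,p_2,p_2,p_1)+\co(\e^3)\right)^p = \e^p|p_1-p_2|^p-\tfrac{p\e^{p+2}}{6}|p_1-p_2|^{p-2}R_{x_0}(p_1,p_2,p_2,p_1)+\co(\e^{p+3})$, using $1\le p<2$ so that $|p_1-p_2|^{p-2}$ stays bounded away from $\infty$ on the relevant configurations (here one needs the pairwise distances to be uniformly bounded below, which follows from the stability statement once it is established that the deficit is $\co(\e^2)$ — a small bootstrapping, exactly as in the main proof). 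Summing, one gets the analogue of~\eqref{xt_form}:
\begin{align*}
\tfrac{n(n+1)}{2}\mathrm{xt}^{(p)}_{n+1}(\bar B_\e^M(x_0))=\e^p\!\!\sum_{1\le k<j\le n+1}\!\!|P_{\e,k}-P_{\e,j}|^p-\tfrac{p\e^{p+2}}{6}\!\!\sum_{1\le k<j\le n+1}\!\!\tfrac{R_{x_0}(P_{\e,k},P_{\e,j},P_{\e,j},P_{\e,k})}{|P_{\e,k}-P_{\e,j}|^{2-p}}+\co(\e^{p+3}).
\end{align*}
Comparing with the test configuration $T^n_{\e,k}=\phi_\e(S^n_k)$ and using the (now established) maximality of the regular simplex for $\mathrm{xt}^{(p)}$, I deduce $\sum_{k<j}|P_{\e,k}-P_{\e,j}|^p=\binom{n+1}{2}\left(\tfrac{2(n+1)}{n}\right)^{p/2}+\co(\e^2)$, hence by Euclidean stability $P_{\e,k}=A(\e)S^n_k+\co(\e)$.

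Finally I substitute this into the curvature sum. Since $|A(\e)S^n_k-A(\e)S^n_j|=\sqrt{2(n+1)/n}$ is constant and the $\co(\e)$ perturbations contribute $\co(\e)$ to the (bounded, because $p<2$) summand $R_{x_0}(\cdot)/|\cdot|^{2-p}$, we get
\begin{align*}
\sum_{1\le k<j\le n+1}\frac{R_{x_0}(P_{\e,k},P_{\e,j},P_{\e,j},P_{\e,k})}{|P_{\e,k}-P_{\e,j}|^{2-p}}=\left(\tfrac{2(n+1)}{n}\right)^{\frac{p-2}{2}}\sum_{1\le k<j\le n+1}R_{x_0}(A(\e)S^n_k,A(\e)S^n_j,A(\e)S^n_j,A(\e)S^n_k)+\co(\e),
\end{align*}
and Lemma~\ref{lem_normal} (applied to the regular simplex $A(\e)\mathcal S^n$, which is again inscribed in $\bar B_1^{\R^n}$) evaluates the last sum to $\tfrac{(n+1)^2}{2n^2}\scal_g(x_0)$. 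Plugging this back, together with the matching lower bound coming from the test configuration $T^n_{\e,k}$ (which by~\eqref{eq_br} and Lemma~\ref{lem_normal} gives exactly the same leading and subleading terms), yields
\[
\mathrm{xt}^{(p)}_{n+1}(\bar B_\e^M(x))=\e^p\left(\tfrac{2(n+1)}{n}\right)^{p/2}-\frac{p}{12n^2}\left(\tfrac{2(n+1)}{n}\right)^{p/2}\scal_g(x)\,\e^{p+2}+\co(\e^{p+3}),
\]
which is the claim; Corollary~\ref{coro_main_p} follows by dividing by $\e^p\mathrm{xt}^{(p)}_{n+1}(\bar B_1^{\R^n})$ and rearranging. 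The one genuinely new ingredient is thus the $\mathrm{xt}^{(p)}$-version of Lemma~\ref{lem_lil} and Proposition~\ref{lem_lil-sta}; the restriction $1\le p<2$ is used both there (to keep the optimizer a regular simplex via concavity of $t\mapsto t^{p/2}$) and in the Riemannian step (to keep $|P_{\e,k}-P_{\e,j}|^{p-2}$ harmless rather than singular).
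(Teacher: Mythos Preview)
Your proposal is correct and follows the same overall strategy as the paper: the paper explicitly says that Theorem~\ref{th_main_p} and Corollary~\ref{coro_main_p} ``can be proved essentially as the case $p=1$'', the only new ingredient being the $p$-version of the stability result (Proposition~\ref{lem_lil-sta}). The one difference is in how this stability is obtained: you argue via Jensen's inequality for the strictly concave map $t\mapsto t^{p/2}$ applied to the quantities $|P_k-P_j|^2$, whereas the paper replaces \eqref{quant-lil} by a quantitative H\"older inequality (between the $\ell^p$ and $\ell^2$ norms of the vector of pairwise distances), citing \cite{Ald} for the explicit deficit term $\mathcal E$. These two arguments are essentially equivalent here, since the power-mean inequality is precisely Jensen for $t^{p/2}$; your formulation has the mild advantage of being self-contained, while the paper's has the advantage of giving an explicit deficit term without further work. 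One small remark: your caveat that $|P_{\e,k}-P_{\e,j}|^{p-2}$ must stay bounded is in fact unnecessary, since (as in the paper's bound preceding \eqref{xt_form2}) the identity $R_{x_0}(P_k,P_j,P_j,P_k)=R_{x_0}(P_k-P_j,P_j,P_j,P_k-P_j)$ already provides a factor $|P_k-P_j|^2$ in the numerator, so the curvature summand is bounded by $C|P_k-P_j|^p$ without any lower bound on the pairwise distances.
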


These results can be proved essentially as the case $p=1$ treated above. The main difference is the proof of the stability result stated as  Proposition \ref{lem_lil-sta}. Namely, one has to replace relation \eqref{quant-lil} with the inequality
\begin{align*}
&\summ1{n+1}|P_{\e,k}-P_{\e,j}|^p\\
&\leq\ton{\frac{n(n+1)}{2}}^{1-p/2}\qua{(n+1)^2-(n+1)^2|G|^2-(n+1)\sum_{k=1}^{n+1}\ton{1-|P_{\e,k}|^2}}\mathcal E,\end{align*}
where
$$\mathcal E:= \qua{1-\ton{1-\frac{p}{2}}\summ 1{n+1}\left|\frac{|P_{\e,k}-P_{\e,j}|}{\ton{\summ1{n+1}|P_{\e,k}-P_{\e,j}|^2}^{1/2}}-\sqrt{\frac{2}{n(n+1)}}\right|^2}.
$$
This latter can be obtained using a quantitative version of H\"older inequality; see for instance \cite[Theorem 2.2]{Ald}.

Note that, for $p\geq 2$, regular $(n+1)$-simplexes are no more the (unique) $(n+1)$-extenders of Euclidean balls, so that the proof fails to work in this case.

\section{Local extent in metric spaces}\label{sect_metric}

The curvature of metric spaces is the object of an active research field. Since the seminal work by Alexandrov, a notion of metric sectional curvature lower bounded is provided. For later purposes, we recall that a locally compact metric space $(X,d)$, whose metric $d$ is intrinsic, is a \textsl{space of curvature bounded above} (resp.
\textsl{below}) by $k$ in the sense of Alexandrov if in some neighborhood of each point the following
holds:\\
For every $\Delta abc$ and every point $d\in [ac]$, one has $|db| \leq |\bar d\bar b|$ (resp.$|db| \geq
|\bar d\bar b|$) where $\bar d$ is the point on the side $[\bar a\bar c]$ of a comparison triangle $\Delta \bar a\bar b\bar c$ such
that $|ad| = |\bar a\bar d|$.
Here a comparision triangle is a triangle of vertices $\bar a$, $\bar b$ and $\bar c$ in the $k$-plane $S_k^2$ (i.e. the $2$-dimensional simply connected space of constant curvature $k$) satisfying $|ab| = |\bar a\bar b|$, $|ac| = |\bar a\bar c|$ and $|bc| = |\bar b\bar c|$.
Other equivalent definitions can be found for instance in \cite{BBI}.

In the last decades several possible notions of Ricci curvature bounds for (measured) metric spaced have been proposed and successfully investigated. 

On the other hand, the aim for a metric notion of scalar curvature is much more recent and seems more difficultous; see for instance \cite{Gro-CEJM,Sormani,Gro-arXiv} and references therein. Specific answers to this problem have been given in particular context; see for instance \cite{Machi,KLP} for Alexandrov surfaces and \cite{Ber-AG1,Ber-AG2} for Alexandrov definable sets in $o$-minimal structures.

Here we propose the following definition

\begin{definition}\label{def_scal}
Let $(X,d)$ be a locally compact metric space and $x\in \mathrm{int}(X)$. We define the ($n$-dimensional) scalar curvature of $X$ at $x$ as
\begin{align*}
\mathrm{Scal}^{(n)}_d(x)=
\lim_{\e\to 0}\frac{12n^2}{\e^2}\left\{1-
\frac{\mathrm{xt}_{n+1}(\bar B_\e^X(x))}{\mathrm{xt}_{n+1}(\bar B_\e^{\R^n}(x))}\right\},
\end{align*}
whenever the limit exists.
\end{definition}

Note that, in general, the scalar curvature of a metric space is not point-wise defined (think for instance to the singular part of a polyhedral space). Accordingly, it could be interesting to consider bounds on the scalar curvaure.
\begin{definition}\label{def_scalbd}
Let $(X,d)$ be a locally compact metric space. We say that $X$ has ($n$-dimensional) scalar curvature greater or equal than $k\in\R$ at $x\in X$, and we write $\mathrm{Scal}^{(n)}_d(x)\geq k$, if
$$\liminf_{\e\to 0}\frac{12n^2}{\e^2}\left\{1-
\frac{\mathrm{xt}_{n+1}(\bar B_\e^X(x))}{\mathrm{xt}_{n+1}(\bar B_\e^{\R^n}(x))}\right\}\geq k.
$$
Similarly, we say that $X$ has ($n$-dimensional) scalar curvature smaller or equal than $k\in\R$ at $x\in X$, and we write $\mathrm{Scal}^{(n)}_d(x)\leq k$, if
$$\limsup_{\e\to 0}\frac{12n^2}{\e^2}\left\{1-
\frac{\mathrm{xt}_{n+1}(\bar B_\e^X(x))}{\mathrm{xt}_{n+1}(\bar B_\e^{\R^n}(x))}\right\}\leq k.
$$

\end{definition}
According to Lemma \ref{lem_lil} below, in case $k=0$ the definition above specifies as follows. A metric space $X$ has nonnegative scalar curvature at $x$ if 
$$\limsup_{\e\to 0}\e^{-1}\mathrm{xt}_{n+1}(\bar B_\e^X(x))\leq \sqrt{2\frac{n+1}{n}},
$$
while $X$ has nonpositive scalar curvature at $x$ if 
$$\liminf_{\e\to 0}\e^{-1}\mathrm{xt}_{n+1}(\bar B_\e^X(x))\geq \sqrt{2\frac{n+1}{n}}.
$$

Thanks to Theorem \ref{th_main} and the subsequent discussion, the above definitions of ($n$-dimensional) scalar curvature are clearly consistent with the classical one when the underlying space is a Riemannian manifold. Moreover, they are also consistent with the curvature bounds in the sense of Alexandrov.

Recall that (finite dimensional) Alexandrov spaces with a lower curvature bound have a natural notion of dimension (in fact, all the reasonable notions of dimension coincide, and the dimension is a positive integer; see \cite[Chapter 10]{BBI}). Accordingly, as in the Riemannian setting one expects the 
following result, which is in fact a direct consequence of \cite[Proposition 10.6.10]{BBI}.
\begin{theorem}\label{th_alex}
Let $(X,d)$ be a $n$-dimensional $CBB(k)$ Alexandrov space, for some $k\in\R$, then for all $x\in X$ and $\epsilon>0$,
\begin{equation}\label{comp_xt}
\mathrm{xt}_{n+1}(\bar B_\e^X(x))\leq \mathrm{xt}_{n+1}(\bar B_\e^{S_k^n}),
\end{equation}
where $S_k^n$ is the simply connected $n$-dimensional space form of constant curvature $k$. In particular
$$\scaln_d(x)\geq kn(n-1).$$
\end{theorem}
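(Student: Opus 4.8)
The plan is to obtain the extent inequality \eqref{comp_xt} from a single point-configuration comparison, and then to feed its right-hand side into the first-order expansion of the Euclidean extent provided by Theorem \ref{th_main}. For the first part I would fix $x\in X$ and $\e>0$, take an arbitrary $(n+1)$-tuple $p_1,\dots,p_{n+1}\in\bar B_\e^X(x)$, and apply the $(1+(n+1))$-point comparison of \cite[Proposition 10.6.10]{BBI} — available precisely because $X$ is $n$-dimensional and $CBB(k)$ — to the $(n+2)$-tuple $(x;p_1,\dots,p_{n+1})$. This produces points $\tilde x,\tilde p_1,\dots,\tilde p_{n+1}\in S_k^n$ with $d_{S_k^n}(\tilde x,\tilde p_i)=d_X(x,p_i)$ and $d_{S_k^n}(\tilde p_i,\tilde p_j)\ge d_X(p_i,p_j)$ for all $i\ne j$. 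The first identity places every $\tilde p_i$ in $\bar B_\e^{S_k^n}(\tilde x)$, so
\[
\sum_{1\le i<j\le n+1} d_X(p_i,p_j)\ \le\ \sum_{1\le i<j\le n+1} d_{S_k^n}(\tilde p_i,\tilde p_j)\ \le\ \binom{n+1}{2}\,\mathrm{xt}_{n+1}(\bar B_\e^{S_k^n}),
\]
and taking the supremum over all such tuples yields \eqref{comp_xt}. (For $k\le 0$ nothing further is needed; for $k>0$ one invokes the global Toponogov theorem, and when $\e\ge\operatorname{diam}S_k^n$ this is just the global inequality $\mathrm{xt}_{n+1}(X)\le\mathrm{xt}_{n+1}(S_k^n)$ already recorded in \cite[(7)]{GM-JAMS}.)

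For the scalar-curvature bound I would then apply Theorem \ref{th_main} to the smooth manifold $S_k^n$, whose scalar curvature equals $n(n-1)k$ everywhere; together with Lemma \ref{lem_lil} this gives
\[
\frac{\mathrm{xt}_{n+1}(\bar B_\e^{S_k^n})}{\mathrm{xt}_{n+1}(\bar B_\e^{\R^n})}=1-\frac{n(n-1)k}{12n^2}\,\e^2+\co(\e^3).
\]
Combining this with \eqref{comp_xt},
\[
\frac{12n^2}{\e^2}\left\{1-\frac{\mathrm{xt}_{n+1}(\bar B_\e^X(x))}{\mathrm{xt}_{n+1}(\bar B_\e^{\R^n})}\right\}\ \ge\ \frac{12n^2}{\e^2}\left\{1-\frac{\mathrm{xt}_{n+1}(\bar B_\e^{S_k^n})}{\mathrm{xt}_{n+1}(\bar B_\e^{\R^n})}\right\}=n(n-1)k+\co(\e),
\]
and passing to the $\liminf$ as $\e\to0$, in view of Definition \ref{def_scalbd}, gives $\scaln_d(x)\ge kn(n-1)$.

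The main obstacle is entirely concentrated in the comparison step. What must be guaranteed is that the model configuration sits inside a ball of radius $\e$ in $S_k^n$, and not merely somewhere in $S_k^n$ — which is exactly why I include the centre $x$ in the tuple, the identity $d_{S_k^n}(\tilde x,\tilde p_i)=d_X(x,p_i)\le\e$ then doing the rest. The content of \cite[Proposition 10.6.10]{BBI} is that such a configuration exists with mutual distances not decreased; it is proved by induction on the dimension, the inductive step being the same assertion for the $n+1$ directions $\uparrow_x^{p_i}$ in the space of directions $\Sigma_x$ — an $(n-1)$-dimensional space of curvature $\ge 1$ — together with the monotonicity of the third side of a hinge in $S_k^2$ as a function of the opening angle, which is what lets one replace comparison angles by the (larger) Alexandrov angles without shrinking distances. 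Once this is in place, the remainder is a routine substitution into the expansion of Theorem \ref{th_main}.
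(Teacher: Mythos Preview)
Your argument is correct and is exactly the route the paper indicates: the extent comparison \eqref{comp_xt} is obtained from the $(n+2)$-point comparison of \cite[Proposition 10.6.10]{BBI} (the paper phrases it as the noncontracting map $f:X\to S_k^n$ used in the proof of Proposition~\ref{prop_rig}, but this is the same content), and the scalar-curvature bound then follows by feeding the model-space expansion of Theorem~\ref{th_main} into Definition~\ref{def_scalbd}. Your care in including the centre $x$ among the compared points, so that the radial equalities force $\tilde p_i\in\bar B_\e^{S_k^n}(\tilde x)$, is precisely the point that makes the citation work.
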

Let $\Delta_k^{n+1}(\epsilon)$ be the unique (up to isometries) regular $(n+1)$-simplex  inscribed in $\bar B_\e^{S_k^n}$ (that is, with totally geodesic faces in $S_k^n$). Note that $\Delta_0^{n+1}(1)=\mathcal S^{n+1}$.
Similarly to the analogous rigidity result for the packing radius (see \cite[Lemma 3.3]{GM-JAMS}), we have the following
\begin{proposition}\label{prop_rig}
Let $(X,d)$ be a $n$-dimensional $CBB(k)$ Alexandrov space, for some $k\in\R$. Suppose that for some $x\in X$ and $\epsilon>0$ (with $\epsilon \leq \frac \pi{4\sqrt k}$ if $k>0$),
\begin{equation}\label{eq_xt}
\mathrm{xt}_{n+1}(\bar B_\e^X(x))= \mathrm{xt}_{n+1}(\bar B_\e^{S_k^n}).
\end{equation}
Then an isometric copy of $\Delta_k^{n+1}(\epsilon)$ with totally geodesic interior is inscribed in $\bar B_\e^X(x)$. In particular, there exists $\delta_0$ depending on $\epsilon$, $k$ and $n$ such that $\bar B_\delta^X(x))$ is isometric to $\bar B_\delta^{S_k^n}$ for every $0<\delta\leq \delta_0$.
\end{proposition}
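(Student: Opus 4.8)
The plan is to analyze the equality case of the comparison argument behind \eqref{comp_xt}, deduce that every $(n+1)$-extender of $\bar B_\e^X(x)$ is metrically a regular simplex of circumradius $\epsilon$ centered at $x$, then upgrade these metric coincidences --- face by face, via the rigidity part of Toponogov's theorem --- to a totally geodesic isometric copy of $\Delta_k^{n+1}(\epsilon)$ inside $\bar B_\e^X(x)$, and finally rule out, by a Bishop--Gromov volume comparison, that a small metric ball at $x$ can stick out of this copy.

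First I would fix an $(n+1)$-extender $\{x_1,\dots,x_{n+1}\}$ of $\bar B_\e^X(x)$, which exists by compactness. The bound \eqref{comp_xt}, which comes from \cite[Proposition 10.6.10]{BBI}, is obtained by producing a comparison configuration $\bar x_1,\dots,\bar x_{n+1}\in \bar B_\e^{S_k^n}(\bar x)$ with $d_{S_k^n}(\bar x,\bar x_i)=d_X(x,x_i)\le\epsilon$ and $d_{S_k^n}(\bar x_i,\bar x_j)\ge d_X(x_i,x_j)$, and then invoking the constant-curvature analogue of Lemma \ref{lem_lil}: the sum $\sum_{i<j}d_{S_k^n}(\bar x_i,\bar x_j)$ is at most $\binom{n+1}{2}\,\mathrm{xt}_{n+1}(\bar B_\e^{S_k^n})$, with equality exactly when $\{\bar x_i\}$ is the vertex set of a regular $(n+1)$-simplex inscribed in $\bar B_\e^{S_k^n}$, i.e. of (a congruent copy of) $\Delta_k^{n+1}(\epsilon)$. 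Under hypothesis \eqref{eq_xt} every inequality in this chain must be an equality, so $d_X(x_i,x_j)$ equals the common edge length $\ell$ of $\Delta_k^{n+1}(\epsilon)$ for all $i\ne j$, and $d_X(x,x_i)=\epsilon$ for all $i$: the $x_i$ lie on $\partial B_\e^X(x)$ and $x$ is their common circumcenter. For $k>0$ the assumption $\epsilon\le\frac{\pi}{4\sqrt k}$ guarantees that $\Delta_k^{n+1}(\epsilon)$ is nondegenerate and embedded with uniquely minimizing edges (for $k\le0$ nothing needs restricting).

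The central and hardest step is to promote this metric data to an isometric embedding $\iota\colon\Delta_k^{n+1}(\epsilon)\hookrightarrow\bar B_\e^X(x)$ with totally geodesic image, carrying the vertices to the $x_i$ and the circumcenter to $x$. Here I would use that equality in the comparison forces equality in all the intermediate hinge and Toponogov comparisons, so that by the rigidity of Toponogov's theorem in $CBB(k)$ spaces the relevant hinges span totally geodesic regions isometric to their constant-curvature models. Starting from the $2$-faces (equilateral model triangles of side $\ell$) and inducting on the dimension of the faces --- at each stage the equality case identifies the next face with the model simplex and as totally geodesic, while the unique minimality of the edges makes the faces glue consistently along their common subfaces --- one reassembles the full $n$-dimensional simplex inside $X$. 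This is the exact analogue, for the extent, of the packing-radius rigidity of \cite[Lemma 3.3]{GM-JAMS}; the bookkeeping of this induction and the verification that the pieces glue into a genuinely totally geodesic copy (rather than merely an isometric image) is the step I expect to be the main obstacle.

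For the last assertion, set $F:=\iota(\Delta_k^{n+1}(\epsilon))$, a compact totally geodesic subset of $X$ isometric to the model simplex and containing $x$ in its interior (the circumcenter). Fix $\delta_0>0$, depending only on $\epsilon$, $k$, $n$, so small that the $S_k^n$-ball of radius $\delta_0$ about the circumcenter lies in the interior of $\Delta_k^{n+1}(\epsilon)$. Then, by total geodesy, $F\cap\bar B_\delta^X(x)$ is an isometric copy of $\bar B_\delta^{S_k^n}$ for every $\delta\le\delta_0$, so $\mathcal H^n(\bar B_\delta^X(x))\ge\mathcal H^n(\bar B_\delta^{S_k^n})$. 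On the other hand, Bishop--Gromov volume comparison for $n$-dimensional $CBB(k)$ spaces gives the reverse inequality $\mathcal H^n(\bar B_\delta^X(x))\le\mathcal H^n(\bar B_\delta^{S_k^n})$. Since any point of $X$ close to $x$ has arbitrarily small neighborhoods of positive $\mathcal H^n$-measure, the presence of a point of $B_\delta^X(x)$ outside the closed set $F$ would force a strict volume excess and contradict the resulting equality; hence $\bar B_\delta^X(x)=F\cap\bar B_\delta^X(x)$ is isometric to $\bar B_\delta^{S_k^n}$ for all $0<\delta\le\delta_0$, as claimed.
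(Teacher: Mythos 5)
Your opening step (pushing an extender forward by the noncontracting map of \cite[Proposition 10.6.10]{BBI} and forcing equality in the resulting chain) is exactly how the paper begins, and it correctly yields that all pairwise distances $d(Q_i,Q_j)$ equal the edge length of $\Delta_k^{n+1}(\epsilon)$. The problem is what you yourself flag as ``the central and hardest step'': producing the totally geodesic isometric copy of $\Delta_k^{n+1}(\epsilon)$. That statement \emph{is} the proposition; a sketch of a Toponogov-hinge induction over faces, with the gluing and the verification of total geodesy left as ``the main obstacle'', does not prove it, and carrying out such an induction from scratch in a $CBB(k)$ space is genuinely delicate (equality in a single hinge comparison does not by itself produce an $n$-dimensional flat/constant-curvature piece; one has to organize the filling argument carefully). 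The paper sidesteps precisely this by a reduction you are missing: since the common edge length equals twice the packing radius of $\bar B_\e^{S_k^n}$ (the extender of the model ball is also an $(n+1)$-packer, \cite[page 19]{GM-JAMS}), the equality \eqref{eq_xt} forces $\operatorname{pack}_{n+1}(\bar B_\e^{X}(x))\geq \operatorname{pack}_{n+1}(\bar B_\e^{S_k^n})$, which is an equality by angle comparison as in \cite[(3.1)]{GM-JAMS}; then the proof of \cite[Lemma 2.3]{GM-JAMS} already delivers the inscribed copy of $\Delta_k^{n+1}(\epsilon)$ with totally geodesic interior and vertices $Q_j$. In other words, the known packing-radius rigidity of Grove--Markvorsen is the key lemma, and your plan amounts to re-proving it without doing so.

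Two smaller points. First, your assertion that $d_X(x,x_i)=\epsilon$ and that $x$ is the circumcenter does not follow from noncontractivity alone: the map only gives $d_X(x,x_i)\leq d_{S_k^n}(\bar x,\bar x_i)=\epsilon$ unless you check that the comparison map is radially isometric at $x$. Second, your Bishop--Gromov argument for the final clause relies on $x$ lying in the interior of the copy (as its circumcenter), which is exactly the unestablished point above; so as written the last paragraph also rests on a gap, even though the basic idea (an $n$-dimensional totally geodesic constant-curvature piece plus volume comparison forces small balls at an interior point to be model balls) is a reasonable way to justify the ``in particular'' statement, which the paper leaves implicit.
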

\begin{remark}{\rm
As for \cite[Lemma 3.3]{GM-JAMS}, Proposition \ref{prop_rig} is sharp, in the sense that in general we can not expect $\bar B_\e^X(x)$ and $\bar B_\e^{S_k^n}$ to be isometric. A trivial counterexample is given by the Alexandrov space $\Delta_k^{n+1}(\epsilon)$, endowed with the metric induced by $S_k^{n}$.

The bound on $\epsilon$ is given by the fact that regular simplexes are no more extenders of $\bar B_\e^{S_k^n}$ for $\epsilon$ close to $\frac {\pi}{2\sqrt k}$ (although they continue being packers).} 
\end{remark}

\begin{proof}
Let $\{Q_j\}_{j=1}^{n+1}$ be a $(n+1)$-extender for $\bar B_\e^X(x)$. Let $f:X\to S^{n}_k$ be the noncontracting map given by \cite[Proposition 10.6.10]{BBI}, and define $\{P_j\}_{j=1}^{n+1}\subset \bar B_\e^{S_k^n}(x)$ as $P_j=f(Q_j)$.
By the non-contractivity of $f$, it holds 
\begin{equation}\label{comp_length}
d_{S_k^n}(P_j,P_k)\geq d(Q_j,Q_k)
\end{equation} 
for every $k$ and $j$. In particular 
\begin{align*}
\left(\begin{array}{c}n+1\\2
\end{array}\right)\mathrm{xt}_{n+1}(\bar B_\e^X(x))
&=\sum_{1\leq j<k\leq n+1}d(Q_j,Q_k)\\
&\leq \sum_{1\leq j<k\leq n+1}d_{S_k^n}(P_j,P_k)\\
&\leq \left(\begin{array}{c}n+1\\2
\end{array}\right) \mathrm{xt}_{n+1}(\bar B_\e^{S_k^n}).
\end{align*}
Equality in \eqref{eq_xt}, together with \eqref{comp_length}, implies that
\begin{equation}\label{equal_P}
d_{S_k^n}(P_j,P_k)=d(Q_j,Q_k)
\end{equation}
for every $k$ and $j$ and that $\{P_j\}_{j=1}^{n+1}$ is a $(n+1)$-extender for $\bar B_\e^{S_k^n}$, which is hence given by $\Delta_k^{n+1}(\epsilon)$; see \cite[page 9]{GM-JAMS}.  

According to the characterization given in \cite[page 19]{GM-JAMS}, $\{P_j\}_{j=1}^{n+1}$ is also a $(n+1)$-packer of $\bar B_\e^{S_k^n}$. From \eqref{equal_P}, we deduce that $\mathrm{pack}_{n+1}(\bar B_\e^{X})\geq \mathrm{pack}_{n+1}(\bar B_\e^{S_k^n})$. By angle comparison, this inequality is in fact an equality, as observed also in \cite[(3.1)]{GM-JAMS}. By the proof of \cite[Lemma 2.3]{GM-JAMS}, we get that $\bar B_{\e}^{X}$ contains an isometric copy of $\Delta_k^{n+1}(\epsilon)$ with totally geodesic interior and vertexes given by $\{Q_j\}_{j=1}^{n+1}$.
%
%
\end{proof}

In the assumption of an upper bound on the curvature, there is not a general natural notion of dimension. However, one can still introduce a geometric dimension $\textrm{GeomDim}(X)$ of a CBA space $X$, which is defined as the smallest function defined on CBA spaces such that a) $\textrm{GeomDim}(X)=0$ whenever $X$ is a discrete space, and b) $\textrm{GeomDim}(X)\geq 1+ \textrm{GeomDim}(\Sigma_p X)$ for every $p\in X$, $\Sigma_p X$ being the space of directions;  see \cite{Kl-MathZ}. B. Kleiner proved in particular that $\textrm{GeomDim}(X)$ is equal to the topological dimension of the space. Moreover, 
whenever $\textrm{GeomDim}(X)<+\infty$, it holds that
$
\textrm{GeomDim}(X)$ is the greater integer $q$ such that there is an isometric embedding of the standard unit sphere $\S^{n-1}\subset\R^n$ into $\Sigma_pX$ for some $p\in X$. By the  definition of the space of directions and by the monotonicity condition for upper curvature bounds on metric space (see \cite[Section 4.3.1]{BBI}), one easily gets
\begin{proposition}
Let $(X,d)$ be a $CBA(k)$ Alexandrov space. Suppose that there is an isometric embedding of the standard unit sphere $\S^{n-1}\subset\R^n$ into $\Sigma_xX$ for some $x\in X$. Then 
$$\scaln_d(x)\leq kn(n-1).$$
\end{proposition}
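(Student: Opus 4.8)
\emph{Strategy.} By Definition~\ref{def_scalbd} it suffices to show that for all sufficiently small $\e>0$
\[
\xt(\bar B_\e^X(x))\ \ge\ \xt(\bar B_\e^{S_k^n}),
\]
since then, dividing by $\xt(\bar B_\e^{\R^n})>0$ and applying Theorem~\ref{th_main} to the smooth space form $S_k^n$ (which has constant scalar curvature $kn(n-1)$, so that $\xt(\bar B_\e^{S_k^n})=\xt(\bar B_\e^{\R^n})\bigl(1-\tfrac{k(n-1)}{12n}\e^{2}+\co(\e^{3})\bigr)$), one gets
\[
\frac{12n^{2}}{\e^{2}}\left(1-\frac{\xt(\bar B_\e^X(x))}{\xt(\bar B_\e^{\R^n})}\right)\ \le\ \frac{12n^{2}}{\e^{2}}\left(1-\frac{\xt(\bar B_\e^{S_k^n})}{\xt(\bar B_\e^{\R^n})}\right)\ =\ kn(n-1)+\co(\e),
\]
whence $\scaln_d(x)\le kn(n-1)$ on letting $\e\to0$. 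Thus the whole point is to exhibit, for each small $\e$, an admissible $(n+1)$-tuple in $\bar B_\e^X(x)$ whose mutual distances are at least the edge lengths of the regular simplex $\Delta_k^{n+1}(\e)$ inscribed in $\bar B_\e^{S_k^n}$.

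\emph{The configuration.} Since $\S^{n-1}$ embeds isometrically in $\Sigma_x X$, it contains the vertices $\xi_{1},\dots,\xi_{n+1}$ of a regular $(n+1)$-simplex inscribed in a round unit $(n-1)$-sphere, so that $d_{\Sigma_x X}(\xi_{j},\xi_{k})=\theta_n:=\arccos(-1/n)$ for $j\neq k$. By the very definition of the space of directions, together with local compactness (so that minimizing geodesics join nearby points), each $\xi_j$ is the initial direction of a unit-speed minimizing geodesic $\gamma_j\colon[0,r_j]\to X$ with $\gamma_j(0)=x$ and $r_j>0$. Put $\e_0:=\min_{j}r_j>0$ and shrink $\e_0$ so that $\bar B_{\e_0}^X(x)$ lies in a neighbourhood of $x$ where the curvature $\le k$ comparison holds and geodesics are unique. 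For $0<\e<\e_0$ set $Q_{\e,j}:=\gamma_j(\e)$, so $d(x,Q_{\e,j})=\e$, $Q_{\e,j}\in\bar B_\e^X(x)$, and $[xQ_{\e,j}]=\gamma_j|_{[0,\e]}$.

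\emph{Comparison.} Fix $j\neq k$. The geodesic triangle with vertices $x,Q_{\e,j},Q_{\e,k}$ has the two sides $[xQ_{\e,j}]$ and $[xQ_{\e,k}]$, both of length $\e$, meeting at $x$ with angle $\angle(\gamma_j,\gamma_k)=d_{\Sigma_x X}(\xi_j,\xi_k)=\theta_n$. Let $\psi_\e$ denote the angle at the tip of the comparison triangle in $S_k^{2}$ with side lengths $\e,\e,d(Q_{\e,j},Q_{\e,k})$; the curvature $\le k$ condition in its angle form, equivalent to the distance form for small triangles (see \cite[Section 4.3.1]{BBI}), gives $\theta_n\le\psi_\e$. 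Writing $g_k(\e,\psi)$ for the length of the side opposite the tip in the isosceles triangle of $S_k^{2}$ with equal sides $\e$ and tip angle $\psi$, the law of cosines shows that $\psi\mapsto g_k(\e,\psi)$ is strictly increasing on $[0,\pi]$ for $\e$ small, in all cases $k>0$, $k=0$, $k<0$. Hence $d(Q_{\e,j},Q_{\e,k})=g_k(\e,\psi_\e)\ge g_k(\e,\theta_n)$, and $g_k(\e,\theta_n)$ is precisely the common edge length of $\Delta_k^{n+1}(\e)$, whose vertices lie at distance $\e$ from the centre of $\bar B_\e^{S_k^n}$ and subtend pairwise angles $\theta_n$ there. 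Summing over all $\binom{n+1}{2}$ pairs and using that for $\e$ small $\Delta_k^{n+1}(\e)$ realizes the $(n+1)$-extent of $\bar B_\e^{S_k^n}$,
\[
\xt(\bar B_\e^X(x))\ \ge\ \frac{2}{n(n+1)}\sum_{1\le j<k\le n+1}d(Q_{\e,j},Q_{\e,k})\ \ge\ g_k(\e,\theta_n)\ =\ \xt(\bar B_\e^{S_k^n}),
\]
which is the inequality required in the first step.

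\emph{Main obstacle.} The delicate point is the production of the configuration: one needs the isometrically embedded $\S^{n-1}$ to contain a full regular $(n+1)$-simplex of \emph{genuine} geodesic directions of uniformly positive length, rather than merely points of the metric completion $\Sigma_x X$ of the set of geodesic directions. Local compactness is used here essentially, and Kleiner's description of the geometric dimension \cite{Kl-MathZ} — in which such isometrically embedded spheres are exactly what is detected — is what makes this step available. Moreover the exactness of the angles $\theta_n$ cannot be relaxed: perturbing the directions by a fixed amount $\delta>0$ would lower the mutual distances by a term of order $\e\delta$, which would swamp the $\e^{2}$ term that encodes the scalar curvature. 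Once the geodesics with the exact angles $\theta_n$ are available, the remainder is the routine angle comparison carried out above.
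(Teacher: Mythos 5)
Your comparison step is exactly the argument the paper has in mind (its own justification is the single sentence invoking the definition of the space of directions and the angle monotonicity of \cite[Section 4.3.1]{BBI}): run geodesics of length $\e$ in the $n+1$ directions of a regular simplex in the embedded $\S^{n-1}$, bound each pairwise distance from below by the corresponding edge of $\Delta_k^{n+1}(\e)$ because in a $CBA(k)$ space the angle between two geodesics is at most the $k$-comparison angle, and conclude via the expansion of Theorem \ref{th_main} applied to $S_k^n$; that part of your write-up is correct. The genuine gap is the step you flag yourself and then pass over: the assertion that each $\xi_j$ is the initial direction of a minimizing geodesic of definite length. This does not follow ``by the very definition of the space of directions'' --- $\Sigma_xX$ is the \emph{completion} of the set of geodesic directions in the angle metric, so its points need not be realized by any geodesic --- nor from local compactness, nor from Kleiner's theorem \cite{Kl-MathZ}, which detects isometrically embedded spheres in $\Sigma_xX$ but says nothing about their points being geodesic directions. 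Nor is the point a formality: take $X=\{x\in\R^3:\ x_3\geq \tfrac{\kappa}{2}(x_1^2+x_2^2)\}$ with its intrinsic (hence Euclidean) metric, a locally compact $CBA(0)$ space, and $x=0$. Then $\Sigma_xX$ is a closed round hemisphere whose equator is an isometric copy of $\S^{1}$ consisting entirely of non-geodesic directions, so the hypothesis holds with $n=2$, $k=0$; yet for three points $x_i\in X$ with $|x_i|\leq\e$ one has $\sum_{i<j}|x_i-x_j|\leq 3\bigl(\sum_i|x_i|^2-3|G|^2\bigr)^{1/2}$ ($G$ the barycenter), and the constraint $x_3\geq\tfrac\kappa2(x_1^2+x_2^2)$ forces every near-extremal configuration to have $|G|\geq c\kappa\e^2$, whence $\xtt(\bar B_\e^X(x))\leq\sqrt3\,\e-c'\kappa^2\e^3$ and the conclusion $\mathrm{Scal}^{(2)}_d(x)\leq 0$ fails. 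So the realizability of the directions by geodesics is precisely where the content lies, and it cannot be extracted from the stated hypotheses.

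Your closing ``main obstacle'' paragraph correctly explains why the obvious repair fails: replacing each $\xi_j$ by a geodesic direction within angle $\delta$ perturbs each comparison edge by a term of order $\e\delta$, so one would need $\delta=\co(\e^2)$ while keeping geodesics of length at least $\e$ in the approximating directions, and nothing controls how fast those lengths may degenerate as $\delta\to0$; since Definition \ref{def_scalbd} is a $\limsup$ over all $\e\to0$, one cannot even retreat to a favorable subsequence of scales. In short, your proof is complete only under the extra assumption that the embedded sphere consists of directions of geodesics of uniformly positive length (as happens, e.g., when $X$ is geodesically complete near $x$), in which case it coincides with the paper's intended argument; as written, the key step is asserted rather than proved --- and, to be fair, the paper's own one-sentence proof is silent on exactly the same point.
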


\section{Curvature measure of $CBB(k)$ surfaces}\label{sect_machi}

In this section we focus on surfaces with lower bounded curvature. Let $(S,d)$ be a $2$-dimensional topological surface, whose metric is $CBB(k)$ for some $k\in\R$. $(S,d)$ is in particular a surface of bounded integral curvature (BIC) so that it supports a well-defined curvature measure $\omega$, see for instance \cite{Re}, \cite{AZ} or the survey \cite{Tr}. This latter measure constitutes a generalization of the Riemannian curvature of a surface to the non-smooth setting, i.e. $d\omega=\sect_g dA_g=\frac{1}{2}\scal_g dA_g$ whenever the distance $d$ of $(S,d)$ is induced by a smooth Riemannian metric $g$ on $S$. In fact $\omega$ can be defined as the weak limit (in the sense of measures) of the sectional curvature of an approximating sequence of smooth metrics. In \cite{Machi}, Y. Machigashira studied the regular part $\omega_{reg}$ of the curvature measure of a $CBB(k)$ surface with respect to the $2$-dimensional Hausdorff measure (restricted to Borel sets). He introduced the (almost everywhere defined) Gaussian curvature function $G$ on $S$ by
\begin{equation}\label{eq_gauss}
G(x)=\inf_{d>0} \underline{G_d}(x);\qquad\underline{G_d}(x):=\liminf_{\Delta\to \{x\}, x\in\Delta,\Delta\in U_d}\frac{e(\Delta)}{Area(\Delta)}
\end{equation}
and proved that $\omega_{reg}(E)=\int_E G(x)d\mathcal H^2$ for every suitably measurable set $E\subset S$ (see \cite{Machi} for details). The liminf in \eqref{eq_gauss} is taken for geodesic triangles converging to the point $x$, containing $x$ in their interior, and with interior angles greater than $d>0$. Moreover the excess $e(\Delta)$ of the geodesic triangle $\Delta$ is defined so that $e(\Delta)+\pi$ is the sum of the interior angles of $\Delta$.

In the following, we are going to prove that the characterization of the scalar curvature via the local extent holds at normal points of a $CBB(k)$ surface. 
\begin{proposition}\label{pr_equal-Machi}
Let $(S,d)$ be a $CBB(k)$ surface, $k\in\R$. Then for $\mathcal H^2$-a.e. $x\in S$ it holds
\begin{equation}\label{equal-Machi}
G(x)=\frac{1}{2}\scal^{(2)}_d(x)
\end{equation}
\end{proposition}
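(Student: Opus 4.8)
The plan is to compare, at $\mathcal{H}^2$-almost every point $x \in S$, the two local quantities: Machigashira's Gaussian curvature function $G(x)$ defined via the normalized excess of small geodesic triangles around $x$, and the metric scalar curvature $\scal^{(2)}_d(x)$ defined via the $3$-extent of small geodesic balls. The key observation is that both are \emph{intrinsic} and should agree wherever the surface is, in a suitable sense, infinitesimally Euclidean. I would first recall the known regularity theory for $CBB(k)$ surfaces: such a surface is a BIC surface, and there is a full-measure set of \emph{normal points} (points where the curvature measure has no atom and the tangent cone is $\R^2$, equivalently where the density of the singular part of $\omega$ vanishes). On such a set, Machigashira's function $G$ is defined and equals the Radon--Nikodym density $\frac{d\omega}{d\mathcal{H}^2}$ of the regular part of the curvature measure with respect to $\mathcal{H}^2$.

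The heart of the argument is to show that at a normal point $x$ the $3$-extent of $\bar B^S_\e(x)$ admits the same asymptotic expansion as in the smooth case, Theorem~\ref{th_main} with $n=2$, namely
\[
\xtt(\bar B^S_\e(x)) = \sqrt{3}\,\e\left\{1 - \tfrac{1}{48}\scal^{(2)}_d(x)\,\e^2 + o(\e^2)\right\},
\]
with $\scal^{(2)}_d(x) = 2G(x)$. I would do this by a smooth approximation argument combined with the rigidity/stability package already developed in Section~\ref{sect_proof}. Concretely: fix a normal point $x$; since the curvature measure is a weak limit of sectional curvatures of smooth metrics $g_j$ converging (in a controlled way, e.g.\ uniformly on a neighborhood) to $d$, and since at a normal point the local mass $|\omega|(\bar B_\e(x))$ is $o(1)$ relative to the Euclidean scale, one controls the difference $|d_g(p,q) - \e|P-Q||$ on $\phi_\e(\bar B^{\R^2}_1)$ using the integral-curvature bound exactly as Lemma~\ref{lem_br} does pointwise in the smooth case — but now with the error term expressed through $|\omega|$ restricted to small balls rather than through $\|\sect\|_{L^\infty}$. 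Because $x$ is normal, $\e^{-2}\omega(\bar B_\e(x)) \to G(x)$, which is precisely what feeds into the cubic coefficient. The stability Proposition~\ref{lem_lil-sta} then forces any $3$-extender of $\bar B^S_\e(x)$ to be, up to $o(\e)$, a regular triangle $\phi_\e(A(\e)\mathcal{S}^2)$, and Lemma~\ref{lem_normal} (which in dimension $2$ is just the identity $\kappa$-sum $= \tfrac{9}{4}\sect = \tfrac{9}{4}G$) converts the averaged curvature into $G(x)$.

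The main obstacle I anticipate is making rigorous the replacement in Lemma~\ref{lem_br} of the smooth Riemann-tensor error term by an error governed by the curvature \emph{measure} on small balls, in a way uniform enough to survive division by $\e^3$. In the smooth proof the remainder $\co(\e^4)$ comes from Taylor expansion of the metric; in the BIC setting one instead needs: (i) a comparison of $d$ with a Euclidean metric on $\bar B_\e(x)$ whose geodesic-length distortion is bounded by $\sup_{y}|\omega|(\bar B_{c\e}(y))$ for $y$ near $x$ — this is essentially the content of the conformal representation of BIC surfaces ($g = e^{2u}g_{eucl}$ with $\Delta u = -\omega$) together with potential-theoretic estimates on $u$; and (ii) the fact that at a normal point this distortion is $o(\e^2)$ on the unit-rescaled picture, with the leading $\e^2 G(x)$ term extracted cleanly. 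I would handle (i) by invoking the classical Reshetnyak conformal structure theorem and standard bounds on the logarithmic potential of a measure with small local mass, and handle (ii) by the definition of normal point. Once the expansion is established, the identity \eqref{equal-Machi} is immediate from comparing it with Definition~\ref{def_scal} (taking $n=2$), since $\scal^{(2)}_d(x) = 2G(x)$ means $G(x) = \tfrac12\scal^{(2)}_d(x)$, and the exceptional set is exactly the $\mathcal{H}^2$-null set of non-normal points where either $G$ is undefined or the $3$-extent fails to converge.
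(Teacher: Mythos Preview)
Your approach diverges substantially from the paper's, and while the overall strategy is plausible, the step you yourself flag as the main obstacle is a genuine gap that is not closed by the sketch you give.

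The paper does \emph{not} attempt to reproduce the machinery of Section~\ref{sect_proof} in the non-smooth setting. Instead it exploits the $CBB(k)$ hypothesis structurally: after normalizing to $k=-1$, Alexandrov's realization theorem identifies a neighborhood of each point with the boundary of a convex body in $\H^3$. At $\mathcal H^2$-a.e.\ point $q$ of this convex hypersurface (the normal points, via a.e.\ second-order differentiability of convex functions) one obtains a biLipschitz homeomorphism $\Phi_\e\colon B_\e(q)\to U_\e\subset S^2_{G(q)}$ onto an open set in the \emph{smooth} constant-curvature surface $S^2_{G(q)}$, with both Lipschitz constants bounded by $1+h(\e)$ where $h(\e)=o(\e^2)$. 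This map transfers $\xtt(\bar B^S_\e(q))$ to $\xtt$ of a ball in $S^2_{G(q)}$, up to multiplicative error $1+o(\e^2)$; since $S^2_{G(q)}$ is smooth, Theorem~\ref{th_main} applies there and gives $\scal^{(2)}=2G(q)$. A two-line sandwich argument then yields $\scal^{(2)}_d(q)=2G(q)$, with no further appeal to Proposition~\ref{lem_lil-sta}, Lemma~\ref{lem_normal}, or any distance expansion on $S$ itself.

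Your route via Reshetnyak's conformal representation and logarithmic-potential estimates would, if it worked, apply to general BIC surfaces rather than only $CBB(k)$ ones, which would be a real gain. But the gap is precisely where you locate it: to obtain the analogue of Lemma~\ref{lem_br} with remainder $o(\e^3)$ you need the conformal factor $u$ (solving $-\Delta u=\omega$) to admit a genuine second-order expansion at $x$, not merely $\e^{-2}\omega(B_\e(x))\to G(x)$. The latter controls $\Delta u$ only in an averaged sense and does not by itself give the uniform pointwise distance asymptotic over all pairs $p_1,p_2$ in the rescaled ball that your argument requires. The smooth-approximation idea you mention first gives only weak convergence of curvature measures and no pointwise control at a fixed $x$; you seem to recognize this and switch to the conformal picture, but that route is left as a hope rather than an estimate. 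The paper sidesteps all of this by outsourcing the second-order information to the convex-geometric black box and invoking Theorem~\ref{th_main} on a smooth model space.
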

\begin{proof}
Since $CBB(1)$ surfaces are in particular $CBB(-1)$, up to a rescaling we can suppose without loss of generality that $k=-1$. According to a well-known result by Alexandrov, \cite[Section XII.2]{Al}, each point in $(S,d)$ has a neighborhood isometric to the boundary of a convex set $K$ in $\H^3$. By \cite[Proposition 6]{Ve-hyper} a.e. point in $\partial K$ is normal. At every normal point $q\in\partial M$ the following hold: for $\epsilon\ll 1$, there exists a biLipschitz homeomorphism $\Phi_\e$ from the metric ball $B_\epsilon(q)\subset\partial K$ onto an open set $\Phi_\e(B_\epsilon(q))=:U_\epsilon\subset S^2_{G(q)}$ contained in a smooth surface $S^2_{G(q)}$ of constant sectional curvature $G(q)$, with Lipschitz constants $L(\Phi)$ and $L(\Phi^{-1})$ smaller than $(1+h(\epsilon^2))$, where the function $h$ is such that $\e^{-2}h(\e)\to 0$ as $\epsilon\to 0$, \cite[Corollary 9 and Theorem 14]{Ve-hyper}.

In particular for every couple of points $x$ and $y$ in $(S,d)$ such that $d(x,q)\leq \e$ and $d(y,q)\leq \e$, one has $\Phi_\e(x),\Phi_\e(y)\in \bar B_{\e+\e h(\e)}\subset S^2_{G(q)}$. Accordingly, 
\begin{align}\label{biLip1}
\mathrm{xt}_3(\bar B_\e^{S}(q)) 
&=\frac 13 \sup_{(x_1,x_2,x_3)\in S^3}\sum_{1\leq i<q\leq 3}d(x_i,x_j)\\
&\leq
\frac 13 (1+h(\e))\sup_{(x_1,x_2,x_3)\in S^3}\sum_{1\leq i<q\leq 3}d_{S^2_{G(q)}}(\Phi_\e(x_i),\Phi_\e(x_j))\nonumber\\
&\leq 
(1+h(\e))\mathrm{xt}_3(\bar B_{\e+\e h(\e)}^{S^2_{G(q)}}), \nonumber
\end{align}
so that 
\begin{align}\label{biLip2}
\frac{12n^2}{\e^2}\left\{1-
\frac{\mathrm{xt}_{3}(\bar B_\e^S(q))}{\mathrm{xt}_{3}(\bar B_\e^{\R^n}(x))}\right\}
&\geq \frac{12n^2}{\e^2}\left\{1-
\frac{(1+h(\e))\mathrm{xt}_{3}(\bar B_\e^{S^2_{G(q)}})}{\mathrm{xt}_{3}(\bar B_\e^{\R^n}(x))}\right\}\\
&\geq \frac{12n^2}{\e^2}\left\{1-
\frac{\mathrm{xt}_{3}(\bar B_\e^{S^2_{G(q)}})}{\mathrm{xt}_{3}(\bar B_\e^{\R^n}(x))}-h(\e)\sqrt{\frac{2n}{n+1}}\right\}.\nonumber
\end{align}
%
%
In particular, by Definition \ref{def_scal}, $\scal_{S}^{(2)}(q)\geq 2G(q)$. Exchanging the roles of $S$ and $S_q$ in \eqref{biLip1} and \eqref{biLip2} gives the converse inequality $\scal_{S}^{(2)}(q)\leq 2G(q)$.
\end{proof}

\section{Towards scalar curvature measure}\label{sect_scalmeas}

Kapovitch, Lytchak and Petrunin recently proposed a notion of metric scalar curvature $\nu$ on a metric measured space $(X,d,\mu)$, called \textsl{metric-measure} curvature, $\mu$  being a positive Radon measure on $(X,d)$. For $x\in X$ define $b_r(x):=\mu(B_r(x))$ and introduce the deviation measure $v_r$ on $X$, absolutely continuous with respect to $\mu$, given by 
\[v_r=\left(1-\frac{b_r}{\vol(B^{\R^n}_r)}\right)\mu.\]
This is primarly motivated by the fact that, when $X=(M,g)$ is a Riemannian manifold endowed with Riemannian distance $d=d_g$ and volume $\mu=\vol_g$, according to \eqref{vol-exp} the one-parameter family  $\{v_r/r^2\}_{r>0}$ converges as a measure to $\frac{1}{6(n+2)}\scal_g d\vol_g $ as $r\to 0$. Accordingly, one expects the limit measure 
\[
\nu:=\lim_{r\to 0} v_r/r^2,
\]
whenever it exists, to be a good candidate to replace the scalar curvature on measure metric spaces. In \cite{KLP} it is in particular proven that the family of measure $v_r/r^2$ is uniformly bounded on surfaces with bounded integral surfaces, so that it exists a (a priori non unique) metric-measure curvature $\nu$. Note that, unfortunately, $\nu$ do not coincide in general with the natural intrinsic curvature measure of the surface; see \cite[Exemple 1.14]{KLP}. 

In this section we adapt the construction of \cite{KLP} to define an \textit{extent-curvature measure} and a \textit{boundary-extent-curvature-measure} which generalize the Scalar curvature (measure) on a Riemannian manifold using \eqref{eq_def-scal} or \eqref{eq_def-scal_bdy} instead of \eqref{vol-exp}. As it is the case for the mm-curvature measure, we will see that extent-curvature measures and boundary-extent-curvature measures exist on BIC surfaces.

In the following, let $(S,d)$ be a surface of bounded integral curvature, and consider its area measure coinciding with the two-dimensional Hausdorff measure $\mathcal H^2$.
For any fixed scale $r$, consider the deviation measure $e_r$ on $X$, absolutely continuous with respect to $\mathcal H^2$, given by 
\[e_r:=\left(1-\frac{\xtt(\bar B^d_r(x))}{\xtt(\bar B^{\R^n}_r)}\right)\mathcal H^2.\]
We will say that $(S,d)$ has \textsl{locally finite extent curvature} if the family of measures $\{e_r/r^2\}_{0<r\leq 1}$ is uniformly bounded. Whenever a limit $\lim_{r\to 0} e_r/r^2$ exists, it is called \textsl{extent curvature measure}. 
Similarly, we can define 
\[\partial e_r:=\left(1-\frac{\partial \xtt(\bar B^d_r(x))}{\partial\xtt(\bar B^{\R^n}_r)}\right)\mathcal H^2\]
and say that $(S,d)$ has \textsl{locally finite boundary extent curvature} if the family of measures $\{\partial e_r/r^2\}_{0<r\leq 1}$ is uniformly bounded. Whenever a limit $\lim_{r\to 0} \partial e_r/r^2$ exists, it is called \textsl{boundary extent curvature measure}. 

We have the following result.
\begin{theorem}\label{th_locfincurv}
Let $(S,d)$ be a surface with bounded integral curvature.
\begin{enumerate}
 \item $(S,d)$  has locally finite extent curvature.
 \item $(S,d)$  has locally finite boundary extent curvature.
\end{enumerate} 
\end{theorem}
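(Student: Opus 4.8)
The plan is to reduce the statement to a single \emph{quantitative} estimate in the smooth case, transfer it to the singular setting by Reshetnyak's approximation theorem, and then finish with a Fubini-type argument, following the overall scheme of \cite{KLP}. Throughout set $n=2$ and, for compact $K\subset S$, note that $|e_r/r^2|(K)=r^{-2}\int_K f_r\,d\mathcal H^2$ and $|\partial e_r/r^2|(K)=r^{-2}\int_K\partial f_r\,d\mathcal H^2$, where $f_r(x):=\bigl|1-\xtt(\bar B_r^d(x))/(\sqrt3\,r)\bigr|$ and $\partial f_r(x):=\bigl|1-\partial\xtt(\bar B_r^d(x))/(\sqrt3\,r)\bigr|$ (recall $\xtt(\bar B_r^{\R^2})=\partial\xtt(\bar B_r^{\R^2})=\sqrt3\,r$, the latter because the Euclidean extent is already realized on the sphere). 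So it suffices to bound $r^{-2}\int_K f_r\,d\mathcal H^2$ and $r^{-2}\int_K\partial f_r\,d\mathcal H^2$ uniformly in $r\in(0,r_0]$ for each compact $K$. The estimate I would establish first is: there exist \emph{universal} $C,\Lambda,\e_0>0$ such that for every smooth surface $(M,g)$, every $x\in\mathrm{int}(M)$ and every $r>0$ small enough (in particular with $\Lambda r<\mathrm{inj}_g(x)$),
\[
\int_{B_{\Lambda r}^g(x)}|K_g|\,dA_g<\e_0\ \Longrightarrow\ f_r(x)\le C\!\int_{B_{\Lambda r}^g(x)}\!|K_g|\,dA_g\ \text{ and }\ \partial f_r(x)\le C\!\int_{B_{\Lambda r}^g(x)}\!|K_g|\,dA_g.
\]
This is a book-keeping refinement of the proofs of Theorem \ref{th_main} and Theorem \ref{th_main_bdy}: the $\co(\e^4)$ remainder in Lemma \ref{lem_br} is to be replaced by one bounded by $\co(r^3)\cdot\int_{B_{\Lambda r}^g(x)}|K_g|\,dA_g$, equivalently one proves $\bigl|d_g(\exp_x(rp),\exp_x(rq))-r|p-q|\bigr|\le C\,r\int_{B_{\Lambda r}^g(x)}|K_g|\,dA_g$ for $p,q\in\bar B_1^{\R^2}$; with this in hand the stability Proposition \ref{lem_lil-sta} (already quantitative and manifold-independent) and the algebraic Lemma \ref{lem_normal} run through the argument of Theorem \ref{th_main} verbatim. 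Alongside it one records the elementary \emph{universal} bound, valid in any locally compact length space: since $\bar B_r^d(x)$ contains $x$, some point $z$ with $d(x,z)=r$, and a midpoint of a shortest $xz$-path, $\tfrac{2}{3\sqrt3}\le\xtt(\bar B_r^d(x))/(\sqrt3\,r)\le\tfrac2{\sqrt3}$ and $0\le\partial\xtt(\bar B_r^d(x))/(\sqrt3\,r)\le\tfrac2{\sqrt3}$, hence $f_r(x)\le c_*<1$ and $\partial f_r(x)\le 1$ with $c_*$ universal.

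Next, transfer to the BIC surface $(S,d)$. Fix a relatively compact open $U\subset S$; by Reshetnyak's structure/approximation theorem, $d|_U$ is a uniform limit of distances $d_k$ of smooth metrics $g_k$ on $U$ with $\omega_k:=K_{g_k}\,dA_{g_k}\rightharpoonup\omega$, $|\omega_k|\rightharpoonup|\omega|$ and $dA_{g_k}\rightharpoonup\mathcal H^2$. For fixed $x\in U$ and a.e.\ $r$, the closed balls $\bar B_\rho^{d_k}(x)$ converge to $\bar B_\rho^d(x)$ in the Hausdorff metric and their bounding spheres are $|\omega|$-null; hence $\xtt(\bar B_r^{d_k}(x))\to\xtt(\bar B_r^d(x))$ and $\partial\xtt(\bar B_r^{d_k}(x))\to\partial\xtt(\bar B_r^d(x))$ (the extent and boundary extent being continuous under Hausdorff convergence of compacta together with uniform convergence of metrics), while $\limsup_k|\omega_k|(B_{\Lambda r}^{d_k}(x))\le|\omega|(\bar B_{\Lambda'r}^d(x))$ for any $\Lambda'>\Lambda$ and $k$ large. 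Passing to the limit in the smooth estimate, enlarging $\Lambda$ slightly, and removing the ``a.e.\ $r$'' restriction via the $2$-Lipschitz monotonicity of $r\mapsto\xtt(\bar B_r^d(x))$, $r\mapsto\partial\xtt(\bar B_r^d(x))$ and the monotonicity of $r\mapsto|\omega|(\bar B_{\Lambda r}^d(x))$, one obtains: for every $x\in U$ and $r\le r_0(U)$, if $|\omega|(\bar B_{\Lambda r}^d(x))<\e_0/2$ then $f_r(x)\le C\,|\omega|(\bar B_{\Lambda r}^d(x))$ and $\partial f_r(x)\le C\,|\omega|(\bar B_{\Lambda r}^d(x))$.

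Now the Fubini step. Let $K\subset S$ be compact, $K'$ a compact neighbourhood of $K$, $r\le r_0$. Splitting $K$ according to whether $|\omega|(\bar B_{\Lambda r}^d(x))<\e_0/2$, using the refined bound on the first part and $f_r\le c_*$ together with Chebyshev's inequality on the second,
\[
\int_K f_r\,d\mathcal H^2\le C\!\int_K|\omega|(\bar B_{\Lambda r}^d(x))\,d\mathcal H^2(x)+c_*\,\mathcal H^2\!\bigl(\{x\in K:|\omega|(\bar B_{\Lambda r}^d(x))\ge\tfrac{\e_0}2\}\bigr)\le\Bigl(C+\tfrac{2c_*}{\e_0}\Bigr)\!\int_K|\omega|(\bar B_{\Lambda r}^d(x))\,d\mathcal H^2(x).
\]
By Fubini's theorem the last integral equals $\int_{K'}\mathcal H^2\bigl(K\cap\bar B_{\Lambda r}^d(y)\bigr)\,d|\omega|(y)$, which is $\le C'\Lambda^2 r^2\,|\omega|(K')$ since $\mathcal H^2(\bar B_\rho^d(y))\le C'\rho^2$ uniformly for $y\in K'$, $\rho\le r_0$ — a standard area bound for BIC surfaces (for $\rho$ small, $\omega^+(\bar B_\rho^d(y))$ stays uniformly below $2\pi$ on $K'$; see \cite{Re}). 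Hence $r^{-2}\int_K f_r\,d\mathcal H^2\le(C+2c_*/\e_0)\,C'\Lambda^2\,|\omega|(K')$ for all $0<r\le r_0$, so $\{e_r/r^2\}$ has uniformly bounded total variation on every compact set, proving (1). Part (2) follows by the identical argument, with $\xtt$, $e_r$, $f_r\le c_*$ and Theorem \ref{th_main} replaced respectively by $\partial\xtt$, $\partial e_r$, $\partial f_r\le 1$ and Theorem \ref{th_main_bdy}.

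The main obstacle is the quantitative smooth estimate of the first step — upgrading the remainder in Lemma \ref{lem_br} from a $\co(\e^4)$ governed by $\sup_{B_{\Lambda r}}|K_g|$ to one governed by the \emph{integrated} curvature $\int_{B_{\Lambda r}^g(x)}|K_g|\,dA_g$, with \emph{universal} constants; this is exactly what lets the Fubini step close, and is the analogue in our setting of the key lemma behind the volume estimate of \cite{KLP}. In two dimensions I would derive it from the conformal representation $g=e^{2u}g_0$: one shows that $\mathrm{osc}_{B_{\Lambda r}}u$ is $O\bigl(\int_{B_{C\Lambda r}}|K_g|\,dA_g\bigr)$ when the latter is small — crucially with no logarithmic loss, since a direct computation with the logarithmic potential of $\omega=-\Delta u$ shows that the $\log r$ terms cancel in the oscillation — and then that the resulting $\bigl(1+O(\int|K_g|)\bigr)$-biLipschitz comparison of $d_g$ with the Euclidean metric on $B_{\Lambda r}$, together with control on the curving of $g$-geodesics, yields the stated distance asymptotics uniformly. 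Securing clean universal constants there, and correctly isolating the effect of possible curvature concentrations (which in the end are absorbed by the crude bound $f_r\le c_*$ and Chebyshev, not by the refined estimate), is the delicate point.
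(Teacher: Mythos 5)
Your overall architecture coincides with the paper's: a key estimate bounding the deviation $\bigl|1-\xtt(\bar B_r(x))/\xtt(\bar B_r^{\R^2})\bigr|$ by the local total curvature when the latter is small (this is exactly the paper's Lemma \ref{lem_KLP}, with constant $3$), a crude universal bound on the deviation, and a Chebyshev/Fubini step against the quadratic area bound for small balls (the paper instead localizes so that the only ``bad'' points are those in $B_{2r}(x_0)$, whose contribution is controlled by the same area estimate from \cite{KLP} that you quote). The difference is how the key estimate is obtained, and that is where there is a genuine gap.

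The proposed proof of the quantitative smooth estimate via the conformal representation does not work as stated. Writing $g=e^{2u}g_0$ with $u$ a logarithmic potential of the curvature, the oscillation of $u$ on $B_{\Lambda r}$ is \emph{not} $O\bigl(\int_{B_{C\Lambda r}}|K_g|\,dA_g\bigr)$: if a curvature mass $\e$ is concentrated in a disc of radius $\rho\ll r$ near one of the evaluation points, the potential contributes roughly $\tfrac{\e}{2\pi}\log(r/\rho)$ to the oscillation, which is arbitrarily large compared with $\e$; the cancellation you invoke removes only the dependence on the overall scale $r$, not on the concentration scale $\rho$. Hence the claimed $\bigl(1+O(\int|K_g|)\bigr)$-biLipschitz comparison, and with it the refined bound on $f_r$ and $\partial f_r$, does not follow from sup-norm control of $u$. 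Nor are such concentrations ``absorbed by Chebyshev'': Chebyshev only discards points where $|\omega|(\bar B_{\Lambda r}(x))\ge\e_0/2$, while a mass $\e_0/4$ concentrated at scale $\rho\ll r$ must still be covered by the refined estimate. The estimate itself is true in that regime (for a cone of angle $2\pi-\e$ the deficit of the $3$-extent of $\bar B_r$ about the vertex is of order $\e r$), but proving it requires controlling distance distortion, not conformal-factor oscillation; this is precisely the biLipschitz parametrization theorem with constant $1+C|\omega|$ that the paper imports, via \cite{KLP} and \cite{BL-MathAnn}, after reducing to polyhedral metrics by Reshetnyak approximation \cite{Re}. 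Your first step would have to invoke (or reprove) that input; once it is granted, the crude bound $f_r\le c_*$, the transfer to the singular surface with the radius-enlargement trick, and the Fubini step are sound and run parallel to the paper's argument. (Two small remarks: for the key estimate you do not need the full third-order expansion of Theorem \ref{th_main} — a first-order distance comparison as in Lemma \ref{lem_br} with the integral-curvature remainder suffices, so the simplex-stability machinery is unnecessary there; and for the boundary extent the passage to limits needs the extra care the paper takes, since competitors are constrained to lie exactly on the metric sphere.)
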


\begin{proof}

The proof is essentially the same as the proof of \cite[Theorem 4.2]{KLP}. We will sketch the relevant changes for completeness.

The main ingredient is the analogue of \cite[Lemma 4.1]{KLP}
\begin{lemma}\label{lem_KLP}
There exists some $\delta_0>0$ with the following property. Let
$S$ be a surface with bounded integral curvature and let $\omega$ be its curvature measure. Let $x\in X$ be a point and let $r>0$ be such that $\bar B^d_r(x)$ is compact, $|\omega|(B^d_r(x))<\delta_0$ and $\bar B^d_r(x)\subset U$ when $U$ is an open subset of $S$ homeomorphic to $\R^2$. Then
\begin{equation}\label{eq_lemKLPxt}
\left|1-\frac{\xtt(\bar B^d_r(x))}{\xtt(\bar B^{\R^2}_r)}\right| \leq 3 |\omega|(B^d_r(x))
\end{equation}
and
\begin{equation}\label{eq_lemKLPbxt}
\left|1-\frac{\partial\xtt(\bar B^d_r(x))}{\partial\xtt(\bar B^{\R^2}_r)}\right| \leq 3 |\omega|(B^d_r(x)).
\end{equation}
\end{lemma}
\begin{proof}
To prove \eqref{eq_lemKLPxt}, it suffices to prove that for all $0<s<r$ \[
\left|1-\frac{\xtt(\bar B^d_s(x))}{\xtt(\bar B^{\R^2}_s)}\right| \leq  3|\omega|(B^d_r(x)).
\]
Moreover it is enough to check this latter inequality on polyhedral metrics homeomorphic to $\R^2$. In fact, let $s<s'<t$. According to \cite[Theorem 4.3]{Re} we take a polyhedral sequence $\{d_k\}$ of metrics on $S$ approaching $d$ uniformly and tamely, see Theorm 8.4.3 and the subsequent discussion \cite{Re}, or \cite[Theorem 7 of Chapter VII]{AZ}. Without loss of generality let 
\[\sup_{y,y'\in U}|d(y,y')-d_k(y,y')|<\frac{1}{k}.\]
Suppose that
\[
\left|1-\frac{\xtt(\bar B^{d_k}_s(x))}{\xtt(\bar B^{\R^n}_s)}\right| \leq  3|\omega|(B^{d_k}_{s'}(x))
\]
for any polyhedral metric $d_k$. Let $x_{j,k}$, $j=1,2,3$ be a $3$-extender of $\bar B^{d_k}_s(x)$. Then 
 $x_{j,k}\in B^d_{s+\frac 1k}(x)$ and $\xtt(\bar B^{d}_s(x))\geq \limsup_{k\to \infty} \xtt(\bar B^{d_k}_s(x))$. Similarly, if $\{y_j\}_{j=1,2,3}$ is a $3$-extender of $\bar B^{d}_s(x)$, then $y_j\in B^{d_k}_{s+1/k}(x)$ so that $\xtt(\bar B^{d}_s(x))\leq \liminf_{k\to \infty} \xtt(\bar B^{d_k}_{s+1/k}(x))$.
 
Then the proof is the same as in \cite{KLP}, and it exploits the construction of an explicit completion of the local polyhedral surface to a global polyhedral BIC metric on $\R^2$ with controlled curvature, as well as the biLipschitz maps to $\R^2$ provided by \cite{BL-MathAnn}.

To conclude, one has just to remark that on regions $(1+\delta')$-biLipschitz diffeomorphic to Euclidean open sets it holds
\[
(1+\delta')^{-2}\leq \frac{\xtt(\bar B_r(x))}{\xtt(\bar B^{\R^n}_r)} \leq (1+\delta')^{2} 
\]
for $r$ small enough (compare with \cite[Section 2.4]{KLP}).

\medskip
The proof of \eqref{eq_lemKLPbxt} is essentially the same. One has just to keep in account that by definition $|\partial\xtt(\bar B_r(x))-\xtt(\bar B_r(x))|<\eta$ whenever the three points realizing $\xtt(\bar B_r(x))$ are in $B_r(x)\setminus B_{r-\eta/6}(x)$.
\end{proof}

We prove (1), the proof of (2) being the same. The result is local so that it suffices to prove it in a neighborhood of an arbitrary point $x_0\in S$. Let $X$ be a neighborhood of $x_0$ homeomorphic to $\R^2$ and satisfying $|\omega|(X\setminus\{x_0\})<\min\{\delta_0;1/3\}$, $\delta_0$ being the positive constant in Lemma \ref{lem_KLP}. Let $A\subset X$ compact. We want to prove that there exist an $\e>0$ and a constant $C=C(\epsilon)>0$ such that for all $0<r<\epsilon$ one has 
\begin{equation}
|e_r|(A)\leq Cr^2
\end{equation}
First note that for any $x\in S$ and positive $s$, it holds $\xtt(\bar B_s(x))\leq 2s$ and
\[ 1\geq 
1-\frac{\xtt(\bar B_s(x))}{\xtt(\bar B^{\R^n}_s)} \geq (1-2\sqrt {3}/3)>-1.\]
According to \cite{KLP}, up to take a smaller $\epsilon$ we have that for all $0<3r<\epsilon$
\[\mathcal H^2 (B_{3r}(x_0))< \frac {9r^2}{4\epsilon}.\]
Then 
\[
|e_r|(A\cap B_{2r}(x_0))\leq |e_r|( B_{2r}(x_0))\leq \mathcal H^2 (B_{3r}(x_0))< \frac {9r^2}{4\epsilon}.\]

On the other hand, for $x\in X\setminus B_r(x_0)$ we have $|\omega|(B_r(x))<\delta_0$. We can thus apply Lemma \ref{lem_KLP} and \cite[Lemmas 2.1 and 4.1]{KLP} to get
\begin{align*}
|e_r|(A\setminus B_{2r}(x_0)) &\leq \int_{A\setminus B_{2r}(x_0)} 3|\omega|(B_r(x)) d\mathcal H^2(x)\\&\leq 3\int_{A\setminus B_{r}(x_0)} \mathcal H^2(B_r(x)) d|\omega|(x)\\
&\leq 6\pi r^2 |\omega|(A\setminus B_{r}(x_0))\\
&\leq 6\pi\delta_0 r^2.
\end{align*}
Thus $|e_r|(A)\leq (6\pi\delta_0+9/4\epsilon) r^2$, as claimed.
\end{proof}

\bigskip
\textbf{Acknowledgement.} I would like to thank J. Bertrand for useful discussion on the subject. In particular, 
I am indebted to him for suggesting the approach to Proposition \ref{pr_equal-Machi} via the local isometry between Alexandrov surfaces and convex surfaces in spaces of constant curvature, and for pointing me out the Alexandrov's theorem on the a.e. second differentiability of convex functions. Also, I'm grateful to A. Bernig for pointing out to me the paper \cite{KLP}, from which Section \ref{sect_scalmeas} arose.

This research has been conducted as part of the project Labex MME-DII (ANR11-LBX-0023-01). The author is members of the ``Gruppo Nazionale per l'Analisi Matematica, la Probabilit\'a e le loro Applicazioni'' (GNAMPA) of the \textsl{Istituto Nazionale di Alta Matematica} (INdAM).

\bibliographystyle{alpha}
\bibliography{scalar}

\begin{thebibliography}{KLP17}

\bibitem[Ald08]{Ald}
J.~M. Aldaz.
\newblock A stability version of {H}\"older's inequality.
\newblock {\em J. Math. Anal. Appl.}, 343(2):842--852, 2008.

\bibitem[Ale06]{Al}
A.~D. Alexandrov.
\newblock {\em A. {D}. {A}lexandrov selected works. {P}art {II}}.
\newblock Chapman \& Hall/CRC, Boca Raton, FL, 2006.
\newblock Intrinsic geometry of convex surfaces, Edited by S. S. Kutateladze,
  Translated from the Russian by S. Vakhrameyev.

\bibitem[AZ67]{AZ}
A.~D. Aleksandrov and V.~A. Zalgaller.
\newblock {\em Intrinsic geometry of surfaces}.
\newblock Translated from the Russian by J. M. Danskin. Translations of
  Mathematical Monographs, Vol. 15. American Mathematical Society, Providence,
  R.I., 1967.

\bibitem[BBI01]{BBI}
Dmitri Burago, Yuri Burago, and Sergei Ivanov.
\newblock {\em A course in metric geometry}, volume~33 of {\em Graduate Studies
  in Mathematics}.
\newblock American Mathematical Society, Providence, RI, 2001.

\bibitem[Ber02]{Ber-AG1}
Andreas Bernig.
\newblock Scalar curvature of definable {A}lexandrov spaces.
\newblock {\em Adv. Geom.}, 2(1):29--55, 2002.

\bibitem[Ber03]{Ber-AG2}
Andreas Bernig.
\newblock Scalar curvature of definable {CAT}-spaces.
\newblock {\em Adv. Geom.}, 3(1):23--43, 2003.

\bibitem[BL03]{BL-MathAnn}
Mario Bonk and Urs Lang.
\newblock Bi-{L}ipschitz parameterization of surfaces.
\newblock {\em Math. Ann.}, 327(1):135--169, 2003.

\bibitem[Bre09]{Br-CQG}
Leo Brewin.
\newblock Riemann normal coordinate expansions using {C}adabra.
\newblock {\em Classical Quantum Gravity}, 26(17):175017, 17, 2009.

\bibitem[Gal88]{Gallot}
Sylvestre Gallot.
\newblock In\'egalit\'es isop\'erim\'etriques et analytiques sur les
  vari\'et\'es riemanniennes.
\newblock {\em Ast\'erisque}, (163-164):5--6, 31--91, 281 (1989), 1988.
\newblock On the geometry of differentiable manifolds (Rome, 1986).

\bibitem[GM92]{GM-Bull}
Karsten Grove and Steen Markvorsen.
\newblock Curvature, triameter, and beyond.
\newblock {\em Bull. Amer. Math. Soc. (N.S.)}, 27(2):261--265, 1992.

\bibitem[GM95]{GM-JAMS}
Karsten Grove and Steen Markvorsen.
\newblock New extremal problems for the {R}iemannian recognition program via
  {A}lexandrov geometry.
\newblock {\em J. Amer. Math. Soc.}, 8(1):1--28, 1995.

\bibitem[Gra73]{Gray}
Alfred Gray.
\newblock The volume of a small geodesic ball of a {R}iemannian manifold.
\newblock {\em Michigan Math. J.}, 20:329--344 (1974), 1973.

\bibitem[Gro14]{Gro-CEJM}
Misha Gromov.
\newblock Dirac and {P}lateau billiards in domains with corners.
\newblock {\em Cent. Eur. J. Math.}, 12(8):1109--1156, 2014.

\bibitem[Gro17]{Gro-arXiv}
Misha Gromov.
\newblock A dozen problems, questions and conjectures about positive scalar
  curvature.
\newblock arXiv:1710.05946, 2017.

\bibitem[JMS95]{JMcES}
Paul Jung, Tucker McElroy, and Jason Samuels.
\newblock The n-extent of $s^3(p,m)$.
\newblock {\em Electronic Јоurnаl of Undergraduate Mathematics}, 1:1--11,
  1995.

\bibitem[Kle99]{Kl-MathZ}
Bruce Kleiner.
\newblock The local structure of length spaces with curvature bounded above.
\newblock {\em Math. Z.}, 231(3):409--456, 1999.

\bibitem[KLP17]{KLP}
Vitali Kapovitch, Alexander Lytchak, and Anton Petrunin.
\newblock Metric-measure boundary and geodesic flow on alexandrov spaces.
\newblock arXiv:1705.04767, 2017.

\bibitem[Lil75]{Li}
J.~N. Lillington.
\newblock Some extremal properties of convex sets.
\newblock {\em Math. Proc. Cambridge Philos. Soc.}, 77:515--524, 1975.

\bibitem[Mac98]{Machi}
Yoshiroh Machigashira.
\newblock The {G}aussian curvature of {A}lexandrov surfaces.
\newblock {\em J. Math. Soc. Japan}, 50(4):859--878, 1998.

\bibitem[Mar97]{Ma-jugo}
Steenn Markvorsen.
\newblock Curvature and shape.
\newblock {\em Zb. Rad. Mat. Inst. Beograd. (N.S.)}, 6(14):55--75, 1997.
\newblock 11th Yugoslav Geometrical Seminar (Div\v cibare, 1996).

\bibitem[Pal57]{Palais}
Richard~S. Palais.
\newblock On the differentiability of isometries.
\newblock {\em Proc. Amer. Math. Soc.}, 8:805--807, 1957.

\bibitem[Res93]{Re}
Yu.~G. Reshetnyak.
\newblock Two-dimensional manifolds of bounded curvature.
\newblock In {\em Geometry, {IV}}, volume~70 of {\em Encyclopaedia Math. Sci.},
  pages 3--163, 245--250. Springer, Berlin, 1993.

\bibitem[Sor16]{Sormani}
Christina Sormani.
\newblock Scalar curvature and intrinsic flat convergence.
\newblock arXiv:1606.08949, 2016.

\bibitem[Tro09]{Tr}
Marc Troyanov.
\newblock Les surfaces \`a courbure int\'egrale born\'ee au sens
  d'{A}lexandrov.
\newblock {\em Annual day of the SMF}, pages 1--18, 2009.

\bibitem[Ver17]{Ve-hyper}
Giona Veronelli.
\newblock Boundary structure of convex sets in the hyperbolic space.
\newblock arXiv:1710.01935, 2017.

\bibitem[Yan94]{Yang-Duke}
DaGang Yang.
\newblock On the topology of non-negatively curved simply connected
  {$4$}-manifolds with discrete symmetry.
\newblock {\em Duke Math. J.}, 74(2):531--545, 1994.

\end{thebibliography}
%
%
%
%
%
%

\end{document}